\newtheorem{lem}{Lemma}
\newtheorem{thrm}{Theorem}
\newtheorem{cor}{Corollary}
\newtheorem{defi}{Definition}
\newtheorem{rem}{Remark}
\newcommand{\ds}{\displaystyle}
\def\R{\mathbb R}
\newcommand{\M}{\mathfrak{M}}
\newcommand{\Mie}{\mathfrak{M}^*}
\newcommand{\dM}{\dr\M}
\newcommand{\dMie}{\dr\Mie}
\newcommand{\T}{\mathcal{T}}
\newcommand{\E}{\mathcal{E}}
\newcommand{\dr}{\partial}
\newcommand{\petitt}{{\scriptscriptstyle\mathcal{T}}}
\newcommand{\DD}{\mathfrak{D}}
\newcommand{\D}{{\mathcal{D}}}
\def\k{{ \mathcal{K}}}
\def\l{{ \mathcal{L}}}
\newcommand{\ke}{{ \mathcal{K}^*}}
\def\le{{\mathcal{L}^*}}
\newcommand{\sig}{ \sigma}
\newcommand{\sige}{\sigma^*}
\newcommand{\Dsig}{{\mathcal{D}_{\sigma,\sigma^*}}}
\newcommand{\petitD}{{\scriptscriptstyle\mathcal{D}}}
\newcommand{\uk}{u_{\k}}
\newcommand{\ul}{u_{\l}}
\newcommand{\uke}{u_{\ke}}
\newcommand{\ule}{u_{\le}}
\newcommand{\xk}{x_{\k}}
\newcommand{\xl}{x_{\l}}
\newcommand{\xke}{x_{\ke}}
\newcommand{\xle}{x_{\le}}
\newcommand{\mk}{{\rm m}({\k})}
\newcommand{\mke}{{\rm m}({\ke})}
\newcommand{\md}{{\rm m}({\D})}
\newcommand{\msige}{{\rm m}({\sige})}
\newcommand{\petitk}{{\scriptscriptstyle \mathcal{K}}}
\newcommand{\petitl}{{\scriptscriptstyle \mathcal{L}}}
\newcommand{\petitke}{{\scriptscriptstyle \mathcal{K}^*}}
\newcommand{\petitle}{{\scriptscriptstyle \mathcal{L}^*}}
\newcommand{\tkele}{\boldsymbol{{\boldsymbol{\tau}}}_{\boldsymbol{\petitke,\petitle}}}
\newcommand{\tkl}{\boldsymbol{{\boldsymbol{\tau}}}_{\boldsymbol{\petitk,\petitl}}}
\newcommand{\nksig}{{\mathbf{n}}_{\sig\petitk}}
\newcommand{\nkesige}{\boldsymbol{{\mathbf{n}}}_{\sige\petitke}}
\newcommand{\dkel}{d_{\ke,\l}}
\newcommand{\dlel}{d_{\le,\l}}
\newcommand{\gradD}{\nabla^{\D}}
\newcommand{\sind}{ \sin(\alpha_{\petitD})}
\newcommand{\sint}{ \sin(\alpha_{\petitt})}
\newcommand{\lv}{\left\vert}
\newcommand{\rv}{\right\vert}
\newcommand{\lV}{\left\Vert}
\newcommand{\rV}{\right\Vert}
\def\pa{\partial}
\title{On discrete functional inequalities for some finite volume schemes}
\author{Marianne BESSEMOULIN-CHATARD, Claire CHAINAIS-HILLAIRET \\ and Francis FILBET}
\begin{document}

\maketitle

\begin{abstract}
We prove several discrete Gagliardo-Nirenberg-Sobolev and Poincaré-Sobolev inequalities for some approximations with arbitrary boundary values on finite volume meshes. The keypoint of our approach is to use the continuous embedding of the space $BV(\Omega)$ into $L^{N/(N-1)}(\Omega)$ for a Lipschitz domain $ \Omega \subset \mathbb{R}^{N}$,  with $N \geq 2$. Finally, we give several applications to discrete duality finite volume (DDFV) schemes which are used for the approximation of nonlinear and non isotropic elliptic and parabolic problems.  
\end{abstract}

%%%%%%%%%%%%%%%%%%%%%%%%%%%%%%%%%%%%%%%%%%%%%%%%%%%%%%%%%

\section{Introduction}

In this paper, we establish some discrete functional inequalities which are sometimes useful for the convergence analysis of finite volume schemes. In the continuous framework, the Gagliardo-Nirenberg-Sobolev and Poincaré-Sobolev inequalities are fundamental for the analysis of partial differential equations. They are a standard tool in existence and regularity theories for solutions. The $L^{2}$ framework is generally used for linear elliptic problems, more precisely it is a classical way to prove the coercivity of bilinear forms in $H^{1}_{0}$, which then allows to apply the Lax-Milgram theorem to prove existence of weak solutions. More generally, the $L^{p}$ framework is crucial for the study of nonlinear elliptic or parabolic equations, to obtain some energy estimates which are useful to prove existence of weak solutions. Poincaré-type inequalities are also one of the step in the study of convergence to equilibrium for kinetic equations.

\subsection{Gagliardo-Nirenberg-Sobolev and Poincaré-Sobolev inequalities}

In the continuous situation, the Poincaré-Sobolev inequality is written as follows. Let assume $N\geq 2$ and $\Omega$ be an open bounded domain of $\R^N$. If $1\leq p<N$, let $1\leq q\leq p^{*}=\frac{pN}{N-p}$; if $p\geq N$, let $1\leq q <+\infty$. Then there exists a constant $C>0$ depending on $p,q,N$ and $\Omega$ such that 
\begin{equation}
\Vert u \Vert_{L^{q}(\Omega)} \, \leq \, C \, \Vert u \Vert_{W^{1,p}(\Omega)} \quad \forall u \in W^{1,p}(\Omega),
\label{sobolevcontinu}
\end{equation}
%for 
%\begin{equation*}\begin{array}{llcl}
%& \ds{ 1 \leq q \leq p^{*}=\frac{pN}{N-p} } & \text{ if } &  1 \leq p<N,\\
%\,
%\\
%\text{or } & 1\leq q <+\infty & \text{ if } & p \geq N.
%\end{array}\end{equation*}
We refer to \cite{Adams1975,Brezis2010} for a proof of this result. We also remind the Gagliardo-Nirenberg-Sobolev inequality \cite{Friedman1969,Nirenberg1959}.  If $1\leq p<N$, let $1\leq s\leq m\leq p^{*}=\frac{pN}{N-p}$; if $p\geq N$, let $1\leq s\leq m<+\infty$. Then there exists a constant $C>0$ depending on $p,s,m,N$ and $\Omega$ such that
\begin{equation}
\Vert u \Vert_{L^{m}(\Omega)} \, \leq \, C \, \Vert u \Vert^{\theta}_{W^{1,p}(\Omega)} \, \Vert u \Vert^{1-\theta}_{L^{s}(\Omega)} \quad \forall u \in W^{1,p}(\Omega)\cap L^s(\Omega),
\label{GNScontinu}
\end{equation}
where 
\begin{equation}
\theta =  \frac{\ds\frac{1}{s}-\frac{1}{m}}{\ds\frac{1}{s}+\frac{1}{N}-\frac{1}{p}}.
\label{theta:opt}
\end{equation}

The mathematical analysis of convergence and error estimates for numerical methods are performed using functional analysis tools, such as discrete Sobolev inequalities. Several Poincaré-Sobolev inequalities have been established for the finite volume schemes as well as for the finite element methods. Concerning the finite volume framework, the first estimates were obtained in the particular case $N=2$, $p=q=2$ (which is the standard Poincaré inequality) for Dirichlet boundary conditions by R. Herbin \cite{Herbin1995} and by Y. Coudi\`ere, J.-P. Vila and P. Villedieu \cite{Coudiere1999}. The idea of the proof is as follows: given an oriented direction $ \mathcal{D}$, any cell center of the mesh is connected to an upstream (with respect to $\mathcal{D}$) center of an edge of the boundary $\pa \Omega$ by a straight line of direction $ \mathcal{D}$. This connection crosses a certain number of cells and their interfaces, and this argument allows to link a norm of the piecewise constant function considered with a norm of a discrete version of its gradient. The proof requires some regularity assumptions on the mesh. The same starting point was used to generalize this result to the case of dimension $N=3$ by R. Eymard, T. Gallou\"et and R. Herbin \cite{Eymard1999,Eymard2000}, for meshes satisfying an orthogonality condition. Also the same method has been applied to get more general Poincaré inequalities for $1 \leq p=q \leq 2$ by J. Droniou, T. Gallou\"et and R. Herbin  \cite{Droniou2003}, and for $1\leq p=q<+\infty$ by B. Andreianov, M. Gutnic and P. Wittbold \cite{Andreianov2004}. Concerning Neumann boundary conditions, a discrete Poincaré-Wirtinger inequality ($p=q=2$) was established in \cite{Eymard2000,Gallouet2000} for $N=2$ or $3$ by using the same method. Then some discrete Poincaré-Sobolev inequalities \eqref{sobolevcontinu} were obtained in the case of Dirichlet boundary conditions under regularity assumption on the mesh, for $p=2$, $1 \leq q < \infty$ if $N=2$, $1 \leq q \leq 6$ if $N=3$ \cite{Coudiere2001,Eymard2000}, as well as for $1 \leq p \leq 2$, $1\leq q \leq p^{*}$ if $p<N$ \cite{Droniou2003}. In these papers, the starting point consists in establishing a bound of the $L^{N/(N-1)}$ norm with respect to a discrete $W^{1,1}$ seminorm in the spirit of the work of L. Nirenberg \cite{Nirenberg1959}. The case of Neumann boundary conditions, $p=2$, $1 \leq q<+\infty$ if $N=2$, $1 \leq q \leq p^{*}$ if $N\geq 3$, is treated in \cite{Chainais-Hillairet2011} by using the same method and a trace result.

More recently, another idea was used to prove this type of discrete inequalities: the continuous embedding of $BV(\Omega)$ into $L^{N/(N-1)}(\Omega)$ for a Lipschitz domain $\Omega$. This argument was already underlying in \cite{Chainais-Hillairet2011,Coudiere2001,Droniou2003,Eymard2000}, but it was exploited directly in the continuous setting in \cite{Filbet2006} to prove a discrete Poincaré-Sobolev inequality (\ref{sobolevcontinu}) in dimension $N=2$ with $q=2$ and $p=1$, in the case of Neumann boundary conditions. Then this method was used in \cite{Eymard2010} to prove general Poincaré-Sobolev inequalities (\ref{sobolevcontinu}) in any dimension $N \geq 1$ in the particular case of homogeneous Dirichlet boundary conditions. This result was then adapted to the case of Neumann or mixed boundary conditions by B. Andreianov, M. Bendahmane and R. Ruiz Baier in \cite{Andreianov2011}. We also mention \cite{Bouchut2011} where the continuous embedding of $BV(\mathbb{R}^{N})$ into $L^{N/(N-1)}(\mathbb{R}^{N})$ is used to establish an improved discrete Gagliardo-Nirenberg-Sobolev inequality in the whole space $ \mathbb{R}^{N}$, $N \geq 1$. 

For $p=2$, general discrete Poincaré-Sobolev inequalities have been obtained  by A.~Glitzky and J.~Griepentrog  in \cite{Glitzky2010} for Voronoi finite volume approximations in the case of arbitrary boundary conditions by using an adaptation of Sobolev's integral representation and the Voronoi property of the mesh. Concerning the finite element framework, a variant of a Poincaré-type inequality ($p=q=2$) for functions in broken Sobolev spaces was derived in \cite{Arnold1982} for $N=2$ and in \cite{Brenner2004,Vohralik2005} for $N=2$, 3. Then a generalised result was proposed in \cite{Lasis2003}, providing bounds on the $L^{q}$ norms in terms of a broken $H^{1}$ norm ($p=2$, $1 \leq q < \infty$ if $N=2$ and $1 \leq q \leq 2N/(N-2)$ if $N \geq 3$). The proof is based on elliptic regularity results and nonconforming finite element interpolants. Finally, a result in non-Hilbertian setting ($p \neq 2$) was obtained in \cite{DiPietro2010}, taking inspiration from the technique used by F. Filbet \cite{Filbet2006} and also R. Eymard,  T.~Gallou\"et and R.~Herbin \cite{Eymard2010}, namely the continuous embedding of $BV(\Omega)$ into $L^{N/(N-1)}(\Omega)$.\\

\subsection{Aim of the paper and outline }
In this paper our aim is to provide a simple proof to  discrete versions of Poincaré-Sobolev (\ref{sobolevcontinu}) and Gagliardo-Nirenberg-Sobolev (\ref{GNScontinu}) inequalities for functions coming from finite volume schemes with arbitrary boundary values. Several Poincaré-Sobolev inequalities are already proved as mentioned above but here we propose a unified result. It includes in particular the case of mixed boundary conditions. Concerning Gagliardo-Nirenberg-Sobolev inequalities, the result of F. Bouchut, R. Eymard and A. Prignet \cite{Bouchut2011} is to our knowledge the only available, and it deals with the case of the whole space $\mathbb{R}^{N}$. 

Our starting point to prove these discrete estimates is the continuous embedding of $BV(\Omega)$ into $L^{N/(N-1)}(\Omega)$, as in \cite{Filbet2006,Eymard2010,DiPietro2010,Bouchut2011}. The main difficulty appears when boundary conditions must be taken into account. In the papers mentioned previously \cite{Filbet2006,Eymard2010,DiPietro2010}, the boundary conditions are either homogeneous Dirichlet or Neumann on the whole boundary. In \cite{Bouchut2011}, the problem is considered in the whole space $ \mathbb{R}^{N}$. In the case where the function satisfies homogeneous Dirichlet boundary conditions only on a part $ \Gamma^{0} \varsubsetneq \pa \Omega $ of the boundary, we cannot use the same strategy as in \cite{Eymard2010}, which consists of extending the function considered to $\mathbb{R}^{N}$ by zero. Our idea is to thicken the boundary of $\Omega$ in order to take the mixed boundary conditions into account in this case.

The outline  of the paper is as follows. In Section \ref{sec.fs}, we first define the functional spaces: the space of finite volume approximations and the space $BV(\Omega)$. We will see that $BV(\Omega)$ is a natural space to study piecewise constant functions as finite volume approximations. In Section~\ref{sec.gc}, we do not take into account any boundary conditions and prove the discrete Poincar\'e-Sobolev inequalities (Theorem \ref{thm4}) and the discrete Gagliardo-Nirenberg-Sobolev inequalities (Theorem \ref{thm3}) in this case. These results are the discrete counterpart of \eqref{sobolevcontinu} and \eqref{GNScontinu}. They may be used for instance in the convergence analysis of finite volume schemes in the case with Neumann boundary conditions.
Then, in Section \ref{sec.dirichlet}, we consider the case where the discrete function is given by a finite volume scheme with homogeneous boundary conditions on a part of the boundary. In this case, the discrete space (for the finite volume approximations) is unchanged. However,  the discrete $W^{1,p}$ seminorm will take into account some jumps on the boundary.  We prove discrete Poincar\'e-Sobolev inequalities (Theorem \ref{thm2}) and discrete Gagliardo-Nirenberg-Sobolev inequalities (Theorem \ref{thm1}), similar to \eqref{sobolevcontinu} and \eqref{GNScontinu} but with the $W^{1,p}$ seminorm instead of the full $W^{1,p}$ norm. Finally, in Section \ref{sec.ddfv}, we show how to extend the results from Sections \ref{sec.gc} and \ref{sec.dirichlet} to finite volume approximations coming from discrete duality finite volume (DDFV) schemes. This family of schemes is mainly applied to anisotropic elliptic and parabolic problems. This method can be applied to a wide class of 2D meshes (but also 3D \cite{Coudi`ere2010}) and inherits the main qualitative properties of the continuous problem: monotonicity, coercivity, variational formulation, etc...

%In Section 2, we consider the case where $u$ satisfies homogeneous Dirichlet boundary conditions on a part $\Gamma^{0} \neq \emptyset$ of the boundary. We start with a preliminary Lemma which allows to apply Theorem \ref{thmBV2}. Then we first prove discrete Gagliardo-Nirenberg-Sobolev inequalities in this case:
%
%Then from this result, we can establish discrete Sobolev-Poincaré inequalities in $L^{p}$, $p \in [1,\infty[$:
%
%Then in Section 3 we present analogous results in the case of arbitrary boundary conditions $\Gamma^{0} = \emptyset$.
%
%Finally we conclude with a general discrete Poincaré-Sobolev inequality.
%
%The difference between Theorems \ref{thm1} and \ref{thm3} and Theorems \ref{thm2} and \ref{thm4} is that the discrete $W^{1,p}$ seminorm is replaced by the discrete $W^{1,p}$ norm.
%

\section{Functional spaces}\label{sec.fs}

\subsection{The space of finite volume approximations}

We now introduce the discrete settings, including notations and assumptions on the meshes and definitions of the discrete norms. Let $ \Omega$ be an open bounded polyhedral susbset (Lipschitz domain) of $ \mathbb{R}^{N}$, $N \geq 2$, and $ \Gamma := \pa \Omega$ its boundary. In the sequel, we denote by d the distance in $ \mathbb{R}^{N}$, m the Lebesgue measure in $\mathbb{R}^{N}$ or $ \mathbb{R}^{N-1}$.

A mesh of $ \Omega$ is given by a family $ \M$ of control volumes, which are open polyhedral subsets of $\Omega$,  a family $ \mathcal{E}$ of relatively open parts of hyperplanes in $ \mathbb{R}^{N}$, which represent the faces of the control volumes,  and a family of points $(x_{K})_{K \in \M}$ which satisfy the following properties:
\begin{itemize}
\item $ \ds{\overline{\Omega}=\bigcup_{K \in \M}\overline{K}}$,
\item for all $K \in \M$, there exists $\E_{K} \subset \E$ such that $\ds{\pa K=\bigcup_{\sigma \in \E_{K}}\overline{\sigma}}$,
\item for all $(K,L) \in \M^{2}$ with $K \neq L$, either $\text{m}(\overline{K}\cap \overline{L})=0$, or $\overline{K} \cap \overline{L} = \overline{\sigma}$ for some $\sigma \in \E$, which will be denoted by $K|L$,
\item the family of points $(x_{K})_{K \in \T}$ is such that for all $K \in \M$, $x_{K} \in K$ and if $\sigma=K|L$, it is assumed that $x_{K} \neq x_{L}$.
\end{itemize}
In the set of faces $ \mathcal{E}$, we distinguish the interior faces $ \sigma \in \mathcal{E}_{int}$ and the boundary faces $ \sigma \in \mathcal{E}_{ext}$. For a control volume $K \in \M$, we denote by $ \mathcal{E}_{K}$ the set of its faces, $ \mathcal{E}_{int,K}$ the set of its interior faces and $ \mathcal{E}_{ext,K}$ the set of faces of $K$ included in the boundary $ \Gamma$.\\
%In the sequel we denote by d the distance in $ \mathbb{R}^{N}$, m the Lebesgue measure in $\mathbb{R}^{N}$ or $ \mathbb{R}^{N-1}$. 
For all $ \sigma \in \mathcal{E}$, we define 
\begin{equation*}
d_{\sigma}= \left\{\begin{array}{lcl} \text{d}(x_{K},x_{L}) & \text{ for } & \sigma=K|L \in \mathcal{E}_{int}, \\ \text{d}(x_{K},\sigma) & \text{ for } & \sigma \in \mathcal{E}_{ext,K}.\end{array}\right.
\end{equation*}
We assume that the mesh satisfies the following regularity constraint: there exists $ \xi >0$ such that 
\begin{equation}
\text{d}(x_{K},\sigma) \geq \xi \, d_{\sigma}, \quad \forall  K \in \M, \ \forall  \sigma \in \mathcal{E}_{K}.
\label{regmesh}
\end{equation}
The size of the mesh is defined by
\begin{equation*}
h = \max_{K \in \M}\left(\text{diam}(K)\right).
\end{equation*}
\\
In general, finite volume methods lead to the computation of one discrete unknown by control volume. The corresponding finite volume approximation is a piecewise constant function. Therefore, we define the set $X(\M)$ of the finite volume approximation: 
\begin{equation*}
X(\M)=\left\{u \in L^{1}(\Omega)\ /\ \exists (u_K)_{K\in\M}\mbox{ such that } u = \sum_{K \in \M}u_{K}\mathbf{1}_{K}\right\}.
\end{equation*}
Let us now define some discrete norms and seminorms on $X(\M)$. Firstly we consider the classical $L^{p}$ norm for piecewise constant functions: for $p\in [1,+\infty)$, 
$$
\Vert \, u \, \Vert_{0,p} \, = \, \left(\int_{\Omega}|u(x)|^{p}\,dx\right)^{\frac{1}{p}}\,=\, \left(\sum_{K \in \M}\text{m}(K)\, |\,u_{K}\,|^{p}\right)^{\frac{1}{p}}, \quad \forall u\in X(\M).
$$
\begin{defi}
Let $ \Omega$ be a bounded polyhedral subset of $ \mathbb{R}^{N}$, $ \M$ a  mesh of $\Omega$.
\begin{enumerate}
\item In the general case, for $p\in[1,+\infty)$, the discrete $W^{1,p}$-seminorm is defined by:
$$
|\,u\,|_{1,p,\M} \, = \, \left(\mathop{\sum_{\sigma \in \mathcal{E}_{int}}}_{\sigma=K|L}\frac{\text{m}(\sigma)}{d_{\sigma}^{p-1}}\, \left\vert u_L-u_K\right\vert^{p}\right)^{\frac{1}{p}}, \quad \forall u\in X(\M)
$$
and the discrete $W^{1,p}$-norm is defined by
\begin{equation}\label{normW1p.gc}
\Vert \, u  \, \Vert_{1,p,\M} \, = \, \Vert \, u \, \Vert_{0,p} \, + \, |\,u\,|_{1,p,\M}, \quad \forall u\in X(\M).
\end{equation}
\item In the case where homogeneous Dirichlet boundary conditions are underlying (because the piecewise constant function comes from a finite volume scheme), we need to take into account jumps on the boundary in the discrete $W^{1,p}$-seminorm. Let $\Gamma^{0} \subset \Gamma$ be a part of the boundary. In the set of exterior faces $ \mathcal{E}_{ext}$, we distinguish $ \mathcal{E}_{ext}^{0}$ the set of boundary faces included in $\Gamma^{0}$. For $p\in [1,+\infty)$, we define the discrete $W^{1,p}$-seminorm (which depends on $ \Gamma^{0}$)  by
\begin{equation}\label{seminormW1p.hc}
|\,u\,|_{1,p,\Gamma^0,\M} \, = \, \left(\sum_{\sigma \in \mathcal{E}}\frac{\text{m}(\sigma)}{d_{\sigma}^{p-1}}\, \left(D_{\sigma}u\right)^{p}\right)^{\frac{1}{p}}, \quad 1 \leq p < +\infty,
\end{equation}
where
\begin{equation*}
D_{\sigma}u = \left\{\begin{array}{lcl} \left| \, u_{K}-u_{L} \, \right| & \text{ if } & \sigma=K|L \in \mathcal{E}_{int}, \\ \left| \, u_{K} \, \right| & \text{ if } & \sigma \in \mathcal{E}_{ext}^{0} \cap \mathcal{E}_{K} ,\\  0  & \text{ if } & \sigma \in \mathcal{E}_{ext}\setminus \mathcal{E}_{ext}^{0}. \end{array}\right.
\end{equation*}
We then define the discrete $W^{1,p}$ norm by
\begin{equation}\label{normW1p.hc}
\Vert \, u  \, \Vert_{1,p,\M} \, = \, \Vert \, u \, \Vert_{0,p} \, + \, |\,u\,|_{1,p,\Gamma^0,\M},  \ \forall u\in X(\M).
\end{equation}
\end{enumerate}
\end{defi}

\begin{rem}
Afterwards we may often use the following inequalities:
\begin{align}
&\Vert\,u\,\Vert_{0,s}\,\leq \, \text{m}(\Omega)^{(p-s)/(ps)}\,\Vert\,u\,\Vert_{0,p},\ \forall 1\leq s\leq p,\label{holder0}\\
&|\,u\,|_{1,s,\M} \leq \left(\frac{N\,\text{m}(\Omega)}{\xi}\right)^{(p-s)/(ps)}\,|\,u\,|_{1,p,\M},\ \forall 1\leq s\leq p. \label{holder1}
\end{align} 
These inequalities result from Hölder's inequality applied with exponent $p/s \geq 1$. Let us detail the proof of \eqref{holder1}. Using Hölder's inequality we get: 
\begin{align*}
|\,u\,|_{1,s,\M}^{s}&=\sum_{\substack{\sigma\in \E_{int}\\ \sigma=K|L}}(\text{m}(\sigma)\text{d}_\sigma)^{s/p}\,\left(\frac{|u_{L}-u_{K}|}{d_{\sigma}}\right)^s(\text{m}(\sigma)\,d_{\sigma})^{(p-s)/p}\\
& \leq |\,u\,|_{1,p,\M}^{s}\times \left(\sum_{\sigma \in \E_{int}}\text{m}(\sigma)\,d_{\sigma}\right)^{(p-s)/p}.
\end{align*}
But  the regularity constraint \eqref{regmesh} on the mesh ensures that
\begin{equation}\label{cons_reg}
\sum_{\sigma \in \E_{int}}\text{m}(\sigma) \,d_{\sigma} \, \leq \,\frac{1}{\xi}\sum_{K \in \T}\sum_{\sigma \in \E_{K}}\text{m}(\sigma)\,\text{d}(x_{K},\sigma)=\frac{N}{\xi}\sum_{K \in \T}\text{m}(K)=\frac{N\,\text{m}(\Omega)}{\xi},
\end{equation}
which finally yields the result.
\end{rem}

\subsection{The space $BV(\Omega)$}\label{sec.BV}

Let us first recall some results concerning functions of bounded variation (we refer to \cite{Ambrosio2000,Ziemer1989} for a thorough presentation $BV(\Omega)$). Let $ \Omega$ be an open set of $ \mathbb{R}^{N}$ and $u \in L^{1}(\Omega)$. The \textit{total variation} of $u$ in $\Omega$, denoted by $TV_{\Omega}(u)$, is defined by
\begin{equation*}
TV_{\Omega}(u)=\sup \left\{ \int_{\Omega}u(x)\, \text{div}\left(\phi(x)\right)\, dx, \quad \phi \in \mathcal{C}^{1}_{c}(\Omega),\quad |\phi(x)|\leq 1, \quad \forall x \in \Omega \right\}
\end{equation*}
and the function $u \in L^{1}(\Omega)$ belongs to $BV(\Omega)$ if and only if $TV_{\Omega}(u)<+\infty$. The space $BV(\Omega)$ is endowed with the norm
\begin{equation*}
\Vert\, u \,\Vert_{BV(\Omega)} := \Vert\, u \,\Vert_{L^{1}(\Omega)}+TV_{\Omega}(u).
\end{equation*}
The space $BV(\Omega)$ is a natural space to study finite volume approximations. Indeed,  for $u\in X(\M)$, we have 
\begin{equation}
TV_{\Omega}(u)= \mathop{\sum_{\sigma \in \mathcal{E}_{int}}}_{\sigma=K|L}\text{m}(\sigma)\, \left\vert u_L-u_K\right\vert=|u|_{1,1,\M}<+\infty.
\label{relBVW11}
\end{equation}
The discrete space $X(\M)$ is included in $L^1\cap BV(\Omega)$.
 Moreover, $\Vert u\Vert_{BV(\Omega)}= \Vert u\Vert_{1,1,\M}$. 
 
Our starting point for the discrete functional inequalities is the continuous embedding of $BV(\Omega)$ into $L^{N/(N-1)}(\Omega)$ for a Lipschitz domain $\Omega$, recalled in Theorem \ref{thmBV1}. More details about the following results can be found in \cite[Chapter 3]{Ambrosio2000} and \cite[Chapter 5]{Ziemer1989}.
\begin{thrm}
Let $ \Omega$ be a Lipschitz bounded domain of $ \mathbb{R}^{N}$, $N\geq 2$. Then there exists a constant $c(\Omega)$ only depending on $\Omega$ such that:
\begin{equation}
\left(\int_{\Omega}|\,u\,|^{\frac{N}{N-1}}\, dx \right)^{\frac{N-1}{N}} \, \leq \, c(\Omega) \,\, \Vert\, u \,\Vert_{BV(\Omega)}, \quad \forall  u \in BV(\Omega).
\label{BV1}
\end{equation}
\label{thmBV1}
\end{thrm}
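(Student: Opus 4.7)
The plan is to prove this classical Gagliardo–Nirenberg inequality in three stages: first establish it on $\mathbb{R}^N$ for smooth compactly supported functions, then pass to $BV(\mathbb{R}^N)$ by approximation, and finally descend to a Lipschitz domain $\Omega$ via an extension operator. I would proceed in this order because the geometry of $\Omega$ only enters through the last step.

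For the first stage, fix $u\in\mathcal{C}^1_c(\mathbb{R}^N)$. For each coordinate direction $i$, the fundamental theorem of calculus gives $|u(x)|\le\int_{\mathbb{R}}|\partial_i u(x_1,\ldots,t_i,\ldots,x_N)|\,dt_i$, so
\[
|u(x)|^{N/(N-1)} \le \prod_{i=1}^{N}\left(\int_{\mathbb{R}}|\partial_i u|\,dt_i\right)^{1/(N-1)}.
\]
Integrating successively over $x_1,\ldots,x_N$ and applying the generalized Hölder inequality (Loomis–Whitney) at each step yields
\[
\|u\|_{L^{N/(N-1)}(\mathbb{R}^N)} \le \prod_{i=1}^N \|\partial_i u\|_{L^1(\mathbb{R}^N)}^{1/N} \le \frac{1}{N}\sum_{i=1}^N\|\partial_i u\|_{L^1} \le \|\nabla u\|_{L^1(\mathbb{R}^N)}.
\]
This is the standard Gagliardo–Nirenberg step; the combinatorial iteration of Hölder is slightly delicate but entirely routine.

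For the second stage, let $v\in BV(\mathbb{R}^N)$. A standard mollification argument produces $v_n\in\mathcal{C}^\infty_c(\mathbb{R}^N)$ with $v_n\to v$ in $L^1(\mathbb{R}^N)$ and $\|\nabla v_n\|_{L^1}\to TV_{\mathbb{R}^N}(v)$ (strict convergence in $BV$). Applying the smooth inequality to $v_n$, passing to a subsequence converging almost everywhere, and invoking Fatou's lemma on the left-hand side yields
\[
\|v\|_{L^{N/(N-1)}(\mathbb{R}^N)} \le TV_{\mathbb{R}^N}(v),\qquad \forall v \in BV(\mathbb{R}^N).
\]

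For the third stage, I would invoke the extension theorem for $BV$ functions on a Lipschitz domain (see e.g.\ \cite{Ambrosio2000}): there exists a bounded linear operator $E:BV(\Omega)\to BV(\mathbb{R}^N)$ such that $Eu=u$ on $\Omega$, $\mathrm{supp}(Eu)$ is contained in a fixed neighborhood of $\overline{\Omega}$, and $\|Eu\|_{BV(\mathbb{R}^N)} \le C(\Omega)\,\|u\|_{BV(\Omega)}$. Applying the $\mathbb{R}^N$-inequality to $Eu$ gives
\[
\left(\int_\Omega |u|^{N/(N-1)}\,dx\right)^{(N-1)/N} \le \|Eu\|_{L^{N/(N-1)}(\mathbb{R}^N)} \le \|Eu\|_{BV(\mathbb{R}^N)} \le C(\Omega)\,\|u\|_{BV(\Omega)},
\]
which is the desired estimate. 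The main obstacle, and the only place where the Lipschitz regularity of $\partial\Omega$ is genuinely used, is the existence and continuity of the extension operator $E$: its construction requires a finite covering of $\partial\Omega$ by Lipschitz charts, local reflection across the flattened boundary, and a partition of unity to glue the pieces while controlling the total variation (including the surface measure of the jump created across $\partial\Omega$). The smooth interior Gagliardo–Nirenberg and the mollification step are by contrast completely standard.
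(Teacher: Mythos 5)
The paper does not actually prove Theorem \ref{thmBV1}: it is quoted as a classical embedding, with the references \cite{Adams1975}, \cite{Lieb1997} (and \cite{Ambrosio2000,Ziemer1989} for the $BV$ background) standing in for a proof. Your argument is therefore not ``the same route'' or ``a different route'' from the paper's --- it is a proof where the paper offers none, and it is the standard textbook one: the Gagliardo--Nirenberg--Sobolev inequality on $\mathbb{R}^N$ for $\mathcal{C}^1_c$ functions via iterated H\"older (Loomis--Whitney), passage to $BV(\mathbb{R}^N)$ by strict approximation with mollifiers and Fatou, and descent to $\Omega$ through a bounded extension operator $E:BV(\Omega)\to BV(\mathbb{R}^N)$. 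All three stages are correct. Two small remarks. First, mollification alone does not produce \emph{compactly supported} approximants of a general $v\in BV(\mathbb{R}^N)$; you either need an additional cutoff $\chi_R$ (the error $\int |v|\,|\nabla\chi_R|\le C\|v\|_{L^1}/R$ vanishes as $R\to\infty$), or you first extend the smooth inequality from $\mathcal{C}^1_c$ to all of $W^{1,1}(\mathbb{R}^N)$ by density and then apply it to the (non-compactly-supported) mollifications. This is routine but should be said. Second, your constant $c(\Omega)$ comes from the norm of the extension operator, hence from the Lipschitz charts of $\partial\Omega$; the paper instead records (after Theorem \ref{thmBV2}) that $c(\Omega)$ can be taken to depend only on the cone parameters of $\Omega$, and that finer dependence is what is actually exploited in the proof of Lemma \ref{lembase} to get a bound uniform in $\varepsilon$ for the thickened domains $\Omega_\varepsilon$. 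Your proof establishes the theorem as stated, but if one wants the version of the constant used later in the paper, the cone-property route of \cite{Adams1975} is the one to follow.
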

Moreover, if $\Omega$ is a connected domain, there are also more precise results involving only the seminorm $TV_{\Omega}(u)$ instead of the norm $ \Vert\, u \,\Vert_{BV(\Omega)}$. Indeed, the seminorm $TV_{\Omega}$ becomes a norm on the space of BV functions vanishing on a part of the boundary and also on the space of BV functions with a zero mean value. In these cases, the continuous embedding of $BV(\Omega)$ into $L^{N/(N-1)}(\Omega)$ is rewritten as in Theorem \ref{thmBV2}.
\begin{thrm}
Let $ \Omega$ be a Lipschitz bounded connected domain of $ \mathbb{R}^{N}$, $N\geq 2$. 
\begin{enumerate}
\item There exists a constant $c(\Omega)>0$ only depending on $ \Omega$ such that, for all $u\in BV(\Omega)$,
\begin{equation}
\left(\int_{\Omega}|\,u-\overline{u}\,|^{\frac{N}{N-1}}\, dx \right)^{\frac{N-1}{N}} \, \leq \, c(\Omega)\,\, TV_{\Omega}(u),
\label{BV3}
\end{equation} 
where $\overline{u}$ is the mean value of $u$:
$$
\overline{u} \, = \, \frac{1}{\text{m}(\Omega)}\, \int_{\Omega}u(x) \, dx.
$$
\item Let $\Gamma^{0} \subset \pa \Omega$, $\text{m}(\Gamma^{0}) >0$. There exists a constant $c(\Omega)>0$ only depending on $ \Omega$ and $\Gamma^{0}$ such that, for all $u\in BV(\Omega)$ satisfying $u=0$ on $ \Gamma^{0}$,
\begin{equation}
\left(\int_{\Omega}|\,u\,|^{\frac{N}{N-1}}\, dx \right)^{\frac{N-1}{N}} \, \leq \, c(\Omega) \,\, TV_{\Omega}(u).
\label{BV2}
\end{equation}

\end{enumerate}
\label{thmBV2}
\end{thrm}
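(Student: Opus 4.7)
The strategy is to deduce both statements of Theorem \ref{thmBV2} from Theorem \ref{thmBV1} by supplementing the latter with an $L^1$--Poincaré inequality that absorbs the $\|u\|_{L^1(\Omega)}$ term appearing in the right--hand side of \eqref{BV1}. Concretely, I would first establish the two auxiliary bounds
$$
\|\,u - \overline{u}\,\|_{L^1(\Omega)} \,\le\, C_1 \, TV_\Omega(u), \qquad \|\,u\,\|_{L^1(\Omega)} \,\le\, C_2 \, TV_\Omega(u) \ \text{ when } u|_{\Gamma^0} = 0.
$$
Once these are at hand, \eqref{BV3} follows by applying \eqref{BV1} to $v := u - \overline{u}$ and noting that $TV_\Omega(v) = TV_\Omega(u)$, while \eqref{BV2} is obtained by applying \eqref{BV1} directly to $u$ and bounding $\|u\|_{L^1(\Omega)}$ by $C_2\, TV_\Omega(u)$.

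To prove each $L^1$--Poincaré inequality, I would argue by contradiction, using the compact embedding $BV(\Omega) \hookrightarrow L^1(\Omega)$ (valid since $\Omega$ is bounded and Lipschitz) together with the $L^1$--lower semicontinuity of the total variation. For the zero--mean case, assume there exists a sequence $(u_n) \subset BV(\Omega)$ with $\overline{u}_n = 0$, $\|u_n\|_{L^1(\Omega)} = 1$, and $TV_\Omega(u_n) \to 0$. Since $(u_n)$ is bounded in $BV(\Omega)$, a subsequence converges in $L^1$ to some $u$; by lower semicontinuity, $TV_\Omega(u) \le \liminf TV_\Omega(u_n) = 0$, so $u$ is constant, and passing to the limit in the mean yields $\overline{u} = 0$, hence $u \equiv 0$. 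This contradicts the fact that $\|u_n\|_{L^1(\Omega)} = 1$ passes to $\|u\|_{L^1(\Omega)} = 1$ in the $L^1$ limit.

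For the inequality involving the boundary constraint on $\Gamma^0$, the same scheme produces a constant limit $u$, and \emph{the main obstacle of the proof} is to transmit the trace condition $u_n|_{\Gamma^0} = 0$ to the limit: $L^1(\Omega)$ convergence alone is too weak to control traces. I would therefore invoke the continuity of the trace operator $BV(\Omega) \to L^1(\partial \Omega)$ with respect to \emph{strict} convergence in $BV$, namely $L^1$--convergence together with convergence of the total variations (see \cite{Ambrosio2000}). Here $u_n \to u$ in $L^1(\Omega)$ and $TV_\Omega(u_n) \to 0 = TV_\Omega(u)$, so the convergence is strict, and consequently $u_n|_{\Gamma^0} \to u|_{\Gamma^0}$ in $L^1(\Gamma^0)$, giving $u|_{\Gamma^0} = 0$. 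Since $\Gamma^0$ has positive $(N-1)$--Hausdorff measure and $u$ is constant, this forces $u \equiv 0$, again contradicting $\|u\|_{L^1(\Omega)} = 1$ and completing the argument.
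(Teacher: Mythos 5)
Your argument is correct, and it is a genuinely different route from the one the paper takes: the paper does not prove Theorem \ref{thmBV2} at all, but recalls it from the literature (\cite{Adams1975}, \cite{Lieb1997}), where it is obtained constructively via Sobolev's integral representation under the cone condition. Your reduction of both parts to Theorem \ref{thmBV1} plus an $L^1$--Poincar\'e inequality is sound: the identity $TV_\Omega(u-\overline{u})=TV_\Omega(u)$ handles part 1, and the compactness--contradiction scheme (compact embedding $BV(\Omega)\hookrightarrow L^1(\Omega)$, $L^1$-lower semicontinuity of $TV_\Omega$, and, for part 2, continuity of the trace under strict convergence, which applies here precisely because $TV_\Omega(u_n)\to 0=TV_\Omega(u)$) is the right tool and correctly identifies the one delicate step, namely passing the trace condition to the limit. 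Two small caveats: you implicitly use that $\Omega$ is connected when concluding that $TV_\Omega(u)=0$ forces $u$ constant (standard for a ``domain'', but worth stating), and you correctly strengthen $\Gamma^0\neq\emptyset$ to $\mathcal{H}^{N-1}(\Gamma^0)>0$, which is in fact necessary for the statement to hold since traces are only defined up to $\mathcal{H}^{N-1}$-null sets. The real trade-off is this: your compactness argument yields a constant $c(\Omega)$ with no quantitative control on how it depends on $\Omega$, whereas the cited approach shows $c(\Omega)$ depends only on the parameters of a cone for which $\Omega$ has the cone property. That refinement is not part of the statement, but the paper leans on it later, in the proof of Lemma \ref{lembase}, where \eqref{BV2} is applied on the thickened domains $\Omega_\varepsilon$ and the uniformity of $c(\Omega_\varepsilon)$ in $\varepsilon$ is essential for the limit $\varepsilon\to 0$; your proof of Theorem \ref{thmBV2} would not by itself supply that uniformity.
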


%%%%%%%%%%%%%%%%%%%%%%%%%%%%%%%%%%%%%%%%%%%%%%%%%%%%%%%%%

\section{Discrete functional inequalities in the general case}\label{sec.gc}

We first consider the general case $u\in X(\M)$ with the discrete $W^{1,p}$ norm defined by \eqref{normW1p.gc}. The discrete functional inequalities we will prove may be useful in the convergence analysis of finite volume methods for problems with homogeneous Neumann boundary conditions. 

\subsection{General discrete Poincaré-Sobolev inequality}

We start with a Lemma which will be useful to prove the discrete Poincaré-Sobolev inequalities which are the discrete counterpart of~\eqref{sobolevcontinu} and then the discrete Gagliardo-Nirenberg-Sobolev inequalities.
 
\begin{lem}
\label{lmm00}
Let $ \Omega$ be an open bounded polyhedral domain of $ \mathbb{R}^{N}$, $N\geq 2$ and  $ \M$ be a mesh satisfying (\ref{regmesh}). For all $s> 1$ and $p>1$, we have
\begin{equation}
\Vert\,u\,\Vert_{0,sN/(N-1)}^{s}\,\leq \, \frac{C}{\xi^{(p-1)/p}}\Vert\,u\,\Vert^{(s-1)}_{0,(s-1)p/(p-1)}\Vert\,u\,\Vert_{1,p,\M},
\label{ineg3ps}
\end{equation}
\end{lem}
\begin{proof}
We prove this lemma by following the same kind of computations as in \cite{Bouchut2011,Chainais-Hillairet2011,Coudiere2001,Droniou2003,Eymard2000}.\\
Let $s > 1$ and $u \in X(\M)$. We apply the inequality \eqref{BV1} to $v \in X(\M)$ given by $v_{K}=|u_{K}|^{s}$ for all $K \in \M$. We obtain that
\begin{equation*}
\Vert\, u \,\Vert_{0,sN/(N-1)}^{s}\, \leq \,c(\Omega)\,\left(\sum_{\substack{\sigma \in \E_{int}\\ \sigma=K|L}}\text{m}(\sigma)\,\left|\vphantom{u^{L+1}} |u_{K}|^{s}-|u_{L}|^{s}\right|+\Vert\, u\,\Vert_{0,s}^{s}\right).
\end{equation*}
However, for all $\sigma=K|L$ and $s > 1$, we have
\begin{equation*}
\left|\vphantom{u^{L+1}} |u_{K}|^{s}-|u_{L}|^{s}\right|\leq s\left(|u_{K}|^{s-1}+|u_{L}|^{s-1}\right)|u_{K}-u_{L}|.
\end{equation*}
Applying this last inequality, a discrete integration by parts and Hölder's inequality, we get for any $p>1$ and $s > 1$:
%\begin{eqnarray*}
%\sum_{\substack{\sigma \in \E_{int}\\ \sigma=K|L}}\text{m}(\sigma)\left|\vphantom{u^{L+1}} |u_{K}|^{s}-|u_{L}|^{s}\right| 
%&\leq& s \sum_{K \in \M}\sum_{\sigma=K|L}\text{m}(\sigma)\,|u_{K}|^{s-1}|u_{K}-u_{L}| 
%\\
%& \leq & s\left(\sum_{\substack{\sigma \in \E_{int}\\ \sigma=K|L}}\frac{\text{m}(\sigma)}{d_{\sigma}^{p-1}}|u_{L}-u_{K}|^{p}\right)^{\frac{1}{p}}\left(\sum_{K \in \M}\sum_{\sigma \in \E_{K}}\text{m}(\sigma)\,d_{\sigma}\,|u_{K}|^{(s-1)p/(p-1)}\right)^{\frac{p-1}{p}}.
%\end{eqnarray*}
\begin{multline*}
\sum_{\substack{\sigma \in \E_{int}\\ \sigma=K|L}}\text{m}(\sigma)\left|\vphantom{u^{L+1}} |u_{K}|^{s}-|u_{L}|^{s}\right| \leq  s\left(\sum_{\substack{\sigma \in \E_{int}\\ \sigma=K|L}}\frac{\text{m}(\sigma)}{d_{\sigma}^{p-1}}|u_{L}-u_{K}|^{p}\right)^{\frac{1}{p}}\times\\\left(\sum_{K \in \M}\sum_{\sigma \in \E_{K}}\text{m}(\sigma)\,d_{\sigma}\,|u_{K}|^{(s-1)p/(p-1)}\right)^{\frac{p-1}{p}}.
\end{multline*}
%Now the regularity constraint \eqref{regmesh} on the mesh ensures that for all $K \in \M$
%\begin{equation*}
%\sum_{\sigma \in \E_{K}}\text{m}(\sigma) \,d_{\sigma} \, \leq \, \frac{1}{\xi}\sum_{\sigma \in \E_{K}}\text{m}(\sigma)\,\text{d}(x_{K},\sigma)=\frac{N}{\xi}\,\text{m}(K),
%\end{equation*}
%which 
Thanks to \eqref{cons_reg}, it yields, for any $p>1$, $s> 1$,
\begin{equation}
\Vert\,u\,\Vert_{0,sN/(N-1)}^{s}\,\leq \, C\left(\frac{1}{\xi^{(p-1)/p}}\,|\,u\,|_{1,p,\M}\Vert\,u\,\Vert^{s-1}_{0,(s-1)p/(p-1)}+\Vert\,u\,\Vert_{0,s}^{s}\right).
\label{ineg1ps}
\end{equation}
On the one hand, by interpolation between $L^{p}$ spaces, since
\begin{equation*}
\frac{1}{s}=\frac{1/s}{p}+\frac{(s-1)/s}{(s-1)p/(p-1)} \,<\, 1,
\end{equation*}
we obtain for any $s\geq 1$ such that $(s-1)p/(p-1)\geq 1$:
\begin{equation}
\Vert\,u\,\Vert_{0,s}\,\leq\,\Vert\,u\,\Vert_{0,p}^{1/s}\,\Vert\,u\,\Vert^{(s-1)/s}_{0,(s-1)p/(p-1)}.
\label{ineg2ps}
\end{equation}
Using \eqref{ineg2ps},  \eqref{ineg1ps} is rewritten 
\begin{equation*}
\Vert\,u\,\Vert_{0,sN/(N-1)}^{s}\,\leq \, \frac{C}{\xi^{(p-1)/p}}\Vert\,u\,\Vert^{(s-1)}_{0,(s-1)p/(p-1)}\Vert\,u\,\Vert_{1,p,\M},
%\label{ineg3ps}
\end{equation*}
which is the expected result.
\end{proof}
Now let us prove the discrete analog to (\ref{sobolevcontinu}), which is already partially proven in \cite[Lemma~A.1]{Chainais-Hillairet2011}.

\begin{thrm}[General discrete Poincaré-Sobolev inequality]
Let $ \Omega$ be an open bounded polyhedral domain of $ \mathbb{R}^{N}$, $N\geq 2$. Let $ \M$ be a mesh satisfying (\ref{regmesh}).
\begin{itemize}
\item If $1\leq p<N$, let $1\leq q\leq p^{*}=\ds\frac{pN}{N-p}$,
\item  if $p\geq N$, let $1\leq q <+\infty$. 
\end{itemize}Then there exists a constant $C>0$ only depending on $p$, $q$, $N$ and $\Omega$ such that:
%\begin{itemize}
%\item if $1 \leq p<N$, for all $1 \leq q \leq p^{*}:=\ds\frac{pN}{N-p}$,
\begin{equation}
\Vert\, u \,\Vert_{0,q} \, \leq \, \frac{C}{\xi^{(p-1)/p}}\,\, \Vert\, u \,\Vert_{1,p,\M},\quad \forall u\in X(\M),
\label{poincare3}
\end{equation}
%\item if $p \geq N$, for all $1 \leq q < + \infty$,
%\begin{equation*}
%\Vert\, u \,\Vert_{0,q,\M} \, \leq \, \frac{C}{\xi^{(p-1)/p}}\,\, \Vert\, u \,\Vert_{1,p,\M}, \quad \forall u\in X(\M).
%\end{equation*}
%\end{itemize}
\label{thm4}
\end{thrm}

\begin{rem}
\label{sncf:0}
Let us emphasize that for $1\leq p < N$, this result allows to recover the optimal embedding with $q=p^\star$ as in the continuous case. However for $p > N$, we cannot prove the inequality with $q=\infty$ since our estimate is blowing-up in the limit $q\rightarrow \infty$.
\end{rem}

\begin{proof}
Throughout this proof, $C$ denotes constants which depend only on $\Omega$, $N$, $p$ and $q$. The proof is divided into four steps corresponding to different values of $p$: the case $p=1$, the case $1<p<N$, the critical case $p=N$ and finally the case $p> N$.

\paragraph*{Case $p=1$.}As seen in Section \ref{sec.BV}, we have $\Vert\,u\,\Vert_{BV(\Omega)}=\Vert\,u\,\Vert_{1,1,\M}$ for all $u\in X(\M)$. Therefore applying Theorem \ref{thmBV1}, we get 
\begin{equation*}
\Vert\,u\,\Vert_{0,N/(N-1)}\,\leq\,c(\Omega)\,\Vert\,u\,\Vert_{1,1,\M},
\end{equation*}
which is the result if $\ds{ q=p^{*}=N/(N-1) }$. It yields  (\ref{poincare3}) for $p=1$ and also for all $\ds{1 \leq q \leq p^{*} }$, thanks to \eqref{holder0}.

\paragraph*{Case $1<p<N$.} We start with the result (\ref{ineg3ps}) of Lemma~\ref{lmm00} and choose $s > 1$ such that $(s-1)p/(p-1)=sN/(N-1) > 1$, that is $s=(N-1)p/(N-p) > 1$, we have then $sN/(N-1)=Np/(N-p)$ and get
\begin{align*}
\Vert\,u\,\Vert_{0,pN/(N-p)}\, %&\leq \, C \left(\frac{1}{\xi^{(p-1)/p}}\,|\,u\,|_{1,p,\M}+\Vert\,u\,\Vert_{0,p,\M}\right)\\
&\leq \, \frac{C}{\xi^{(p-1)/p}}\,\Vert\,u\,\Vert_{1,p,\M}.
\end{align*}
Finally since $L^{pN/(N-p)}(\Omega) \subset L^{q}(\Omega)$ for all $1 \leq q \leq pN/(N-p)$, we get that
\begin{equation*}
\Vert\,u\,\Vert_{0,q}\, \leq \,\frac{C}{\xi^{(p-1)/p}}\,\Vert\,u\,\Vert_{1,p,\M} \quad  1 \leq q \leq p^{*},
\end{equation*}
and the proof is complete for $1 \leq p <N$.

\paragraph*{Case $p=N$.}The proof is similar to that of the previous case but we cannot apply the last argument since $pN/(N-p)$ blows up for $p=N$. Therefore, we start  again from \eqref{ineg3ps} of Lemma~\ref{lmm00} for any $s > 1$ and $p=N$
\begin{equation*}
\Vert\,u\,\Vert_{0,sN/(N-1)}^{s}\,\leq\, \frac{C}{\xi^{(N-1)/N}}\,\Vert\,u\,\Vert_{1,N,\M}\,\Vert\,u\,\Vert_{0,(s-1)N/(N-1)}^{s-1}.
\end{equation*}
Now observing that $sN/(N-1)\geq (s-1)N/(N-1)$ and using inclusion of $L^{sN/(N-1)}(\Omega)$ in $L^{(s-1)N/(N-1)}(\Omega)$, we get that
\begin{equation*}
\Vert\,u\,\Vert_{0,(s-1)N/(N-1)}\,\leq\,\frac{C}{\xi^{(N-1)/N}}\,\Vert\,u\,\Vert_{1,N,\M} \quad \text{for } s > 1.
\end{equation*}
Then for all $q \geq 1$, we choose $s=1+(N-1)q/N > 1$, which yields that $q=(s-1)N/(N-1) \geq 1$ and finally we obtain the result:
\begin{equation}
\Vert\,u\,\Vert_{0,q}\,\leq\,\frac{C}{\xi^{(N-1)/N}}\,\Vert\,u\,\Vert_{1,N,\M} \quad \forall q \geq 1.
\label{ineg30ps}
\end{equation}
\paragraph*{Case $p>N$.}We obtain the result using the fact that 
\begin{equation}
\Vert\,u\,\Vert_{1,N,\M} \,\leq\,\frac{C}{\xi^{(p-N)/pN}}\,\Vert\,u\,\Vert_{1,p,\M} \quad \forall p \geq N.
\label{ineg4ps}
\end{equation}
Gathering \eqref{ineg30ps} and \eqref{ineg4ps} we get
\begin{equation*}
\Vert\,u\,\Vert_{0,q} \leq \frac{C}{\xi^{(N-1)/N}}\,\Vert\,u\,\Vert_{1,N,\M}\,\leq \,\frac{C}{\xi^{(p-1)/p}}\,\Vert\,u\,\Vert_{1,p,\M} \quad \forall q \geq 1,
\end{equation*}
which completes the proof of Theorem \ref{thm4}.
\end{proof}

%%%%%%%%%%%%%%%%%%%%%%%%%%%%%%%%%%%%%%%%%%%%%%

\subsection{General discrete Gagliardo-Nirenberg-Sobolev inequality}

We now give the discrete counterpart of the Gagliardo-Nirenberg-Sobolev inequalities \eqref{GNScontinu}.

\begin{thrm}[General discrete Gagliardo-Nirenberg-Sobolev inequality]
Let $ \Omega$ be an open boun\-ded polyhedral domain of $ \mathbb{R}^{N}$, $N\geq 2$. Let $ \M$ be a mesh satisfying (\ref{regmesh}).
\begin{itemize}
\item If $1\leq p<N$, let $1\leq s \leq m \leq p^{\ast}=\ds\frac{pN}{(N-p)}$,
\item  if $p\geq N$, let $1\leq s \leq m <+\infty$. %\leq \ds\frac{N}{(N-1)}+\frac{(p-1)N}{p(N-1)}s
\end{itemize}
Then,  there exists a constant $C>0$ only depending on $p$, $s$, $m$, $N$ and $\Omega$ such that:
%\begin{itemize}
%\item if $1 \leq p <N$,
\begin{equation}\label{GNS}
\Vert\, u \,\Vert_{0,m} \, \leq \, \frac{C}{\xi^{(p-1)\theta /p}}\,\, \Vert\, u \,\Vert^{\theta}_{1,p,\M}\, \Vert \,u \,\Vert_{0,s}^{1-\theta},\ \forall u\in X(\M),
\end{equation}
where $\theta$ is given by \eqref{theta:opt}:
$$
\theta =  \frac{\ds\frac{1}{s}-\frac{1}{m}}{\ds\frac{1}{s}+\frac{1}{N}-\frac{1}{p}}.
$$
%
%\item if $p \geq N$, then
%\begin{equation*}
%\Vert\, u \,\Vert_{0,m,\M} \, \leq \, \frac{C}{\xi^{(p-1)\theta /p}}\,\, \Vert\, u \,\Vert^{\theta}_{1,p,\M}\, \Vert \,u \,\Vert_{0,s,\M}^{1-\theta},\ \forall u\in X(\M),
%\end{equation*}
%where $1-s/m<\theta\leq 1$.
%\end{itemize}
\label{thm3}
\end{thrm}

%\begin{rem}
%\label{rem:00}
%Note that this result is optimal for $1\leq p < N$, but not for $p\geq N$ as the values taken by $m$ are limited. However, it includes most of the cases used in the applications.
%%Indeed, on the one hand for $p=N$, the optimal result in the continuous case is given for $\theta=1-s/m$.  On the other hand for $p>N$, the optimal result is given for smaller values of $\theta$ as indicated in (\ref{theta:opt}).
%\end{rem}
\begin{proof}
Throughout this proof, $C$ denotes constants which depend only on $\Omega$, $N$, $p$, $s$ and $\theta$. We distinguish the case $1\leq p<N$ from the case $p\geq N$.

\paragraph*{Case $1\leq p <N$.}
For all $1\leq s\leq m \leq p^{\ast}$, there exists $\theta \in [0,1]$  such that 
$$
\frac{1}{m}=\frac{\theta}{p^{\ast}}+\frac{1-\theta}{s}
$$
and $\theta$ is given by \eqref{theta:opt}. Interpolation between $L^p$ spaces gives:
\begin{equation}\label{inegGNS0}
\Vert\,u\,\Vert_{0,m}\,\leq\,\Vert\,u\,\Vert_{0,p^{\ast}}^{\theta}\,\Vert\,u\,\Vert_{0,s}^{1-\theta}.
\end{equation}
Applying Theorem \ref{thm4} with $q=p^{\ast}$, we deduce \eqref{GNS} from \eqref{inegGNS0}. 
\paragraph*{Case $p\geq N$. } We proceed by induction and first prove that \eqref{GNS} occurs in the case where 
\begin{equation}\label{condGNS}
1\leq s\leq m\leq s+1. %\ds\frac{N}{(N-1)}+\frac{(p-1)N}{p(N-1)}s
\end{equation}
 Therefore, we start from \eqref{ineg3ps} in Lemma~\ref{lmm00} written with $r> 1$ instead of $s$ : 
$$
\Vert\,u\,\Vert_{0,rN/(N-1)}^{r}\,\leq \, \frac{C}{\xi^{(p-1)/p}}\Vert\,u\,\Vert^{(r-1)}_{0,(r-1)p/(p-1)}\Bigl(\,|\,u\,|_{1,p,\M}+ \Vert\,u\,\Vert_{0,p}\Bigl).
$$
With $r=1+(p-1)s/p$, which is equivalently written $s=(r-1)p/(p-1)$, we get 
\begin{equation}\label{inegGNS1}
\Vert\,u\,\Vert_{0,rN/(N-1)}\,\leq \, \frac{C}{\xi^{(p-1)/(pr)}}\Vert\,u\,\Vert^{(r-1)/r}_{0,s} \Vert\,u\,\Vert_{1,p,\M}^{1/r}.
\end{equation}
Now since $rN/(N-1)\geq s+1$ for $p\geq N\geq 2$, we have for any $m$  verifying \eqref{condGNS}, there exists $\alpha\in[0,1]$ such that 
\begin{equation}\label{defalpha}
\frac{1}{m}= \frac{\alpha}{rN/(N-1)}+\frac{1-\alpha}{s},
\end{equation}
which implies by interpolation between $L^p$ spaces,
$$
\Vert u\Vert_{0,m}\leq \Vert u\Vert_{0,rN/(N-1)}^\alpha\Vert u\Vert_{0,s}^{1-\alpha}
$$
and using \eqref{inegGNS1}, we get:
\begin{equation}\label{inegGNS2}
\Vert u\Vert_{0,m}\leq \frac{C}{\xi^{\alpha(p-1)/(pr)}}\Vert\,u\,\Vert^{1-\alpha/r}_{0,s} \Vert\,u\,\Vert_{1,p,\M}^{\alpha/r}.
\end{equation}
It remains to verify that \eqref{defalpha} with $r=1+(p-1)s/p$ implies that 
$$
\frac{\alpha}{r}= \ds\frac{\ds\frac{1}{s}-\frac{1}{m}}{\ds\frac{1}{s}+\frac{1}{N}-\frac{1}{p}}=\theta.
$$
As $\alpha/r=\theta$, inequality \eqref{inegGNS2} corresponds to the expected inequality \eqref{GNS}. The result is proved if $m$ satisfies \eqref{condGNS}.  

%But, let us note that, when $p\geq N\geq 2$, $\frac{N}{(N-1)}+\frac{(p-1)N}{p(N-1)}s\geq 1+s$. Therefore, the discrete Gagliardo-Nirenberg-Sobolev inequality \eqref{GNS}, with $\theta$ given by \eqref{theta:opt}, holds for $1\leq s\leq m\leq s+1$.

Then, let us now prove by induction that it holds for $1\leq s\leq m<+\infty$. 

For a given $k\in \R$, we assume that \eqref{GNS} is satisfied for $1\leq s+k\leq m\leq s+k+1$. Then, for $m=s+k+1$, we have 
\begin{equation}\label{inegGNS10}
\Vert u\Vert_{0,s+k+1} \leq \frac{C}{\xi^{(p-1){\bar \theta}/p}}\Vert u\Vert_{1,p,\M}^{\bar \theta} \Vert u\Vert_{0,s}^{1-{\bar\theta}},
\end{equation}
with 
$$
{\bar \theta}=\ds\frac{\ds\frac{1}{s}-\frac{1}{s+k+1}}{\ds\frac{1}{s}+\frac{1}{N}-\frac{1}{p}}.
$$
Let us now choose $1\leq s+k+1\leq m\leq s+k+2$, applying \eqref{GNS} with $s+k+1$ instead of $s$, we have 
\begin{equation}\label{inegGNS11}
 \Vert u\Vert_{0,m} \leq \frac{C}{\xi^{(p-1){\tilde \theta}/p}}\Vert u\Vert_{1,p,\M}^{\tilde \theta} \Vert u\Vert_{0,s+k+1}^{1-{\tilde\theta}}
\end{equation}
with 
$$
{\tilde \theta}=\ds\frac{\ds\frac{1}{s+k+1}-\frac{1}{m}}{\ds\frac{1}{s+k+1}+\frac{1}{N}-\frac{1}{p}}.
$$
Injecting \eqref{inegGNS10} in \eqref{inegGNS11}, we get :
$$
\Vert u\Vert_{0,m} \leq \frac{C}{\xi^{(p-1)({\tilde \theta}+\bar\theta(1-\tilde \theta)/p}}\Vert u\Vert_{1,p,\M}^{\tilde \theta+\bar\theta(1-\tilde\theta)} \Vert u\Vert_{0,s}^{(1-\bar\theta)(1-{\tilde\theta})}.
$$
But $(1-\bar\theta)(1-\tilde \theta)=(1-\theta)$  and $\tilde \theta+\bar\theta(1-\tilde\theta)=\theta$, with $\theta$ given by \eqref{theta:opt}. It means that \eqref{GNS} holds for $1\leq s+k+1\leq m\leq s+k+2$ and it concludes the proof by induction.
%We take $q=p^{*}=pN/(N-p)$:
%\begin{equation*}
%\Vert\,u\,\Vert_{0,p^{*},\M}\,\leq\,\frac{C}{\xi^{(p-1)/p}}\,\Vert\,u\,\Vert_{1,p,\M},
%\end{equation*}
%which gives
%\begin{equation*}
%\Vert\,u\,\Vert_{0,m,\M}\,\leq\,\frac{C}{\xi^{(p-1)\theta /p}}\,\Vert\,u\,\Vert_{1,p,\M}^{\theta}\,\Vert\,u\,\Vert_{0,s,\M}^{1-\theta}
%\end{equation*}
%for $\theta \in [0,1]$ satisfying \eqref{condm}.
%\paragraph*{Case $p \geq N$. }  We can take any $q\in [m,+\infty)$:
%\begin{equation*}
%\Vert\,u\,\Vert_{0,q,\M}\,\leq\,\frac{C}{\xi^{(p-1)/p}}\,\Vert\,u\,\Vert_{1,p,\M},
%\end{equation*}
%which gives
%\begin{equation*}
%\Vert\,u\,\Vert_{0,m,\M}\,\leq\,\frac{C}{\xi^{(p-1)\theta /p}}\,\Vert\,u\,\Vert_{1,p,\M}^{\theta}\,\Vert\,u\,\Vert_{0,s,\M}^{1-\theta},
%\end{equation*}
%for all $1 \leq s \leq m<+\infty$ with $\theta \in ]1-s/m;1]$, since $\ds{\theta=\frac{1/s-1/m}{1/s-1/q}}$.
\end{proof}

%%%%%%%%%%%%%%%%%%%%%%%%%%%%%%%%%%%%%%%%%%%%%%%%%%%%%%%%

\subsection{Other discrete functional inequalities}

From Theorems \ref{thm4} and \ref{thm3}, we can deduce a discrete Nash inequality, which can be used in the proof of existence of solution for stochastic PDEs including non-monotone stochastic generalized porous-medium equations \cite{Nash2} or to get lower bound of solutions of congestion games \cite{Nash1}:

\begin{cor}[Discrete Nash inequality]
Let $ \Omega$ be an open bounded polyhedral domain of $ \mathbb{R}^{N}$. Let $ \M$ be a mesh satisfying (\ref{regmesh}).
Then there exists a constant $C>0$ only depending on $ \Omega$ and $N$ such that
\begin{equation*}
\Vert\, u \,\Vert_{0,2}^{1+\frac{2}{N}} \, \leq \, \frac{C}{\sqrt{\xi}} \,\, \Vert\, u \,\Vert_{1,2,\M} \, \Vert\, u \,\Vert^{\frac{2}{N}}_{0,1},\quad \forall u\in X(\M).
\end{equation*}
\end{cor}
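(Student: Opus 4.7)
The inequality \eqref{nash2} is the discrete Nash inequality; it follows classically from a Sobolev embedding combined with standard $L^p$ interpolation, and I would mimic this chain of reasoning by invoking Theorems \ref{thm3} and \ref{thm4}. A useful observation throughout is that every $u\in X(\M)$ is a piecewise constant function on $\Omega$, so the discrete $L^r$-norms $\Vert u\Vert_{0,r,\M}$ agree with the usual $L^r(\Omega)$-norms and the interpolation inequality \eqref{interpolation} can be used freely.

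In the main case $N\ge 3$, I would apply Theorem \ref{thm4} with $p=2$ to obtain the critical discrete Sobolev estimate $\Vert u\Vert_{0,2^*,\M}\le C\,\xi^{-1/2}\Vert u\Vert_{1,2,\M}$ with $2^*=2N/(N-2)$. Next, \eqref{interpolation} between $L^1$ and $L^{2^*}$ with $\alpha=2/(N+2)$ yields $\Vert u\Vert_{0,2,\M}\le \Vert u\Vert_{0,1,\M}^{\alpha}\Vert u\Vert_{0,2^*,\M}^{1-\alpha}$. Substituting the Sobolev estimate and raising both sides to the power $(N+2)/N=1+2/N$ produces \eqref{nash2} with the expected factor $\xi^{-1/2}$.

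The remaining case $N=2$ cannot be handled this way, because $2^*=\infty$ lies outside the range of Theorem \ref{thm4}. Instead, I would apply Theorem \ref{thm3} directly with $p=2$, $q=1$ and $\theta=N/(N+2)=1/2$; this choice forces $m=2$ via \eqref{condm} and satisfies the admissibility condition $\theta\le p/(p+q(p-1))=2/3$. Squaring the resulting estimate $\Vert u\Vert_{0,2,\M}\le C\,\xi^{-1/4}\Vert u\Vert_{1,2,\M}^{1/2}\Vert u\Vert_{0,1,\M}^{1/2}$ yields \eqref{nash2}, since $1+2/N=2$ when $N=2$. The only real subtlety anywhere is the bookkeeping of the $\xi$-exponent during the final rescaling; the case split is forced by the observation that for $N\ge 5$ the natural parameters $p=2$, $q=1$, $m=2$ would require $\theta=N/(N+2)>2/3$, outside the admissible range of Theorem \ref{thm3}, so the Sobolev plus interpolation route must take over in that regime.
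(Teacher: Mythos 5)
Your proposal is correct and follows essentially the same route as the paper: for $N\ge 3$ it combines the critical Sobolev--Poincar\'e estimate of Theorem \ref{thm4} (with $p=2$, $q=2N/(N-2)$) with the $L^1$--$L^{2N/(N-2)}$ interpolation (the paper writes this step as a H\"older inequality, which is the same computation), and for $N=2$ it invokes Theorem \ref{thm3} with $p=2$, $q=1$, $\theta=1/2$, $m=2$, exactly as in the paper. Your closing remark explaining why the direct use of Theorem \ref{thm3} fails for $N\ge 5$ (since $\theta=N/(N+2)$ would exceed $2/3$) is a correct and welcome clarification of why the case split is needed.
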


\begin{proof}
For $N=2$, the result is directly given by the application of Theorem \ref{thm3} with $p=2$, $s=1$, $\theta=1/N=1/2$ and $m=2$. 
For $N\geq 3$, let us first apply H\"older's inequality:
\begin{equation}
\Vert\, u \,\Vert_{0,2}^{2} = \sum_{K \in \M}\text{m}(K) |u_{K}|^{4/(N+2)}\, |u_{K}|^{2N/(N+2)} \, \leq \, \Vert\, u \,\Vert_{0,1}^{4/(N+2)} \, \Vert\, u \,\Vert_{0,2N/(N-2)}^{2N/(N+2)}.
\label{nashp1}
\end{equation}
Then we apply Theorem \ref{thm4} with $1 \leq p=2<N$ and $q=p^{*}=2N/(N-2)$:
\begin{equation}
\Vert\, u \,\Vert_{0,2N/(N-2)} \, \leq \, \frac{C}{\sqrt{\xi}}\, \Vert\, u \,\Vert_{1,2,\M}.
\label{nashp2}
\end{equation}
Gathering (\ref{nashp1}) and (\ref{nashp2}), it yields the result.

\end{proof}

In the proofs of Theorem \ref{thm4} and Theorem \ref{thm3}, we have used the continuous embedding of $BV(\Omega)$ into $L^{N/(N-1)}(\Omega)$ as it is written in Theorem \ref{thmBV1}. But, starting with \eqref{BV3} instead of \eqref{BV1} leads to the following discrete Poincar\'e-Wirtinger inequality. 

\begin{thrm}[Discrete Poincaré-Wirtinger inequality]
Let $ \Omega$ be an open bounded connected polyhedral domain of $ \mathbb{R}^{N}$. Let $ \M$ be a mesh satisfying (\ref{regmesh}). Then for $1 \leq p < + \infty$ there exists a constant $C>0$ only depending on $ \Omega$, $N$ and $p$ such that:
\begin{equation}
\left\Vert\, u-\overline{u}\,\right\Vert_{0,p} \leq \frac{C}{\xi^{(p-1)/p}}\left|\, u \,\right|_{1,p,\M} \quad \forall u \in X(\M).
\label{poincarewirtinger}
\end{equation}
We recall that ${\bar u}=\ds\frac{1}{\text{m}(\Omega)}\int_{\Omega} u(x) dx= \ds\frac{1}{\text{m}(\Omega)}\sum_{K\in{\M}} \text{m}(K)u_K$, for $u \in X({\M})$.
\end{thrm}

\begin{proof}
Throughout this proof, $C$ denotes constants which depend only on $ \Omega$, $N$ and $p$.\\
Using inequalities \eqref{holder0} and \eqref{BV3}, we have for all $u \in X(\M)$:
\begin{equation*}
\Vert\, u-\overline{u}\,\Vert_{0,p} \leq C \, \Vert\,u-\overline{u}\,\Vert_{0,N/(N-1)}\leq C \, |\,u\,|_{1,1,\M} \quad \forall 1 \leq p \leq \frac{N}{N-1}.
\end{equation*}
Then applying inequality \eqref{holder1}, it gives the result \eqref{poincarewirtinger} for $1 \leq p \leq N/(N-1)$:
\begin{equation}
\Vert\, u-\overline{u}\,\Vert_{0,p} \leq \frac{C}{\xi^{(p-1)/p}}\,|\,u\,|_{1,p,\M} \quad \forall 1 \leq p \leq \frac{N}{N-1}.
\label{ineg0pw}
\end{equation}
Let us now take $s \geq 1$. For $ u \in X(\M)$, we define $v \in X(\M)$ by $v_{K}=|\,u_{K}-\overline{u}\,|^{s}$ for all $K \in \M$. On the one hand, we have
\begin{align}
\left\Vert\, v - \overline{v} \,\right\Vert_{0,N/(N-1)} & \geq \Vert\, v \,\Vert_{0,N/(N-1)}-\Vert\, \overline{v} \,\Vert_{0,N/(N-1)} \nonumber \\\,\nonumber\\
& \geq \Vert\, u-\overline{u}\,\Vert_{0,sN/(N-1)}^{s}-\frac{\Vert\, u-\overline{u} \,\Vert_{0,s}^{s}}{\text{m}(\Omega)^{1/N}}.\label{ineg1pw}
\end{align}
On the other hand, using the same technique as in the proof of Theorem \ref{thm4}, we have:
\begin{equation}
|\,v\,|_{1,1,\M} \leq \frac{C\,s}{\xi^{(p-1)/p}}\,|\,u\,|_{1,p,\M}\,\Vert\, u-\overline{u}\,\Vert_{0,(s-1)p/(p-1)}^{(s-1)} \quad \forall p > 1, \quad \forall s \geq 1.
\label{ineg2pw}
\end{equation}
Therefore, gathering \eqref{ineg1pw} and \eqref{ineg2pw}, we get from \eqref{BV3} that for $u \in X(\M)$, $p > 1$ and $s \geq 1$:
\begin{equation*}
\Vert\, u-\overline{u}\,\Vert_{0,sN/(N-1)}^{s} \leq \frac{Cs}{\xi^{(p-1)/p}} \, |\,u\,|_{1,p,\M}\,\Vert\,u-\overline{u}\,\Vert_{0,(s-1)p/(p-1)}^{(s-1)}+\frac{\Vert\, u-\overline{u} \,\Vert_{0,s}^{s}}{\text{m}(\Omega)^{1/N}}.
\end{equation*}
Moreover, by interpolation between $L^{r}$ spaces, for $q \geq 1$, $p > 1$, $s \geq 1$ such that 
\begin{equation*}
\frac{1}{s}=\frac{1/s}{p}+\frac{(s-1)/s}{q}
\end{equation*}
we have that
\begin{equation*}
\Vert\,u-\overline{u}\,\Vert_{0,s} \leq \Vert\, u-\overline{u}\,\Vert_{0,p}^{1/s}\,\Vert\,u-\overline{u}\,\Vert_{0,q}^{(s-1)/s}.
\end{equation*}
Choosing $q=sN/(N-1)\geq 1$ and $s=(N-1)p/(N-p)$ for $1< p<N$, it yields
\begin{equation}
\Vert\,u-\overline{u}\,\Vert_{0,q} \leq C \left(\frac{1}{\xi^{(p-1)/p}}\,|\,u\,|_{1,p,\M}+\Vert\,u-\overline{u}\,\Vert_{0,p} \right) \quad \forall 1 \leq p<N,
\label{ineg3pw}
\end{equation}
where
\begin{equation*}
q=q(p):=\frac{pN}{N-p}.
\end{equation*}
Applying \eqref{ineg0pw} and using the fact that since $p<q=pN/(N-p)$, inequality \eqref{holder1} provides
\begin{equation}
|\,u\,|_{1,p,\M} \leq \frac{C}{\xi^{(q-p)/pq}}\,|\,u\,|_{1,q,\M}=\frac{C}{\xi^{1/N}}\,|\,u\,|_{1,q,\M},
\label{holder}
\end{equation}
we can estimate the right hand side of \eqref{ineg3pw} for $1\leq p \leq N/(N-1)$, which yields:
\begin{equation*}
\Vert\, u-\overline{u}\,\Vert_{0,q} \leq \frac{C}{\xi^{(q-1)/q}}\,|\,u\,|_{1,q,\M} \quad \forall 1 \leq q \leq \frac{N}{N-2}.
\end{equation*}
Using exactly the same technique, we proceed by induction to prove the result for $1 \leq p \leq N/(N-k)$, up to $k=N-1$, which finally yields the result for all $1 \leq p \leq N$.\\
To conclude, we apply \eqref{ineg3pw}. Indeed, using the result for $1 \leq p <N$ and the inequality \eqref{holder}, since $q=q(p)=pN/(N-p) \in [1;+\infty)$ if $p \in [1;N)$, we obtain the general result:
\begin{equation*}
\Vert\, u-\overline{u}\,\Vert_{0,q} \leq \frac{C}{\xi^{(q-1)/q}}\,|\,u\,|_{1,q,\M} \quad \forall 1 \leq q <+\infty.
\end{equation*}
\end{proof}

%%%%%%%%%%%%%%%%%%%%%%%%%%%%%%%%%%%%%%%%%%%%%%%%%%%%%%%%%

\section{Discrete functional inequalities in the case of Dirichlet boundary conditions}\label{sec.dirichlet}

In this section, we consider the case where the finite volume approximation $u\in X({\M})$ is coming from a finite volume scheme where homogeneous boundary conditions are prescribed on a part of the boundary. This part of the boundary is denoted by $\Gamma^0\subset \Gamma$, $\text{m}(\Gamma^0)>0$. In this case, the natural discrete counterparts of the $W^{1,p}$seminorm and $W^{1,p}$ norm are defined by \eqref{seminormW1p.hc} and \eqref{normW1p.hc}.
Moreover, the $W^{1,p}$ seminorm becomes a norm on the space of $W^{1,p}$ functions vanishing on a part of the boundary and the Gagliardo-Nirenberg-Sobolev inequalities and the Poincar\'e-Sobolev inequalities may be rewritten with the $W^{1,p}$-seminorm instead of the $W^{1,p}$-norm. Our aim in this Section is to prove the discrete counterpart of such inequalities (see Theorem \ref{thm2} and Theorem~\ref{thm1}).

As in the general case, the starting point will be the continuous embedding from $BV(\Omega)$ into $L^{N/(N-1)}(\Omega)$, which is rewritten as \eqref{BV2} with homogeneous Dirichlet boundary conditions on the part of the boundary. However, \eqref{BV2} can not be directly applied to $u\in X({\M})$. Indeed, $u\in X({\M})$ belongs to $BV(\Omega)$ and therefore its trace on the boundary is well defined; but it does not necessarily vanish on $\Gamma^0$. Some adaptations must be done in order to apply \eqref{BV2} and get its discrete counterpart. It will be done in Section \ref{sec_prellim} and yield the discrete functional inequalities presented in Section \ref{sec_Sobolevdir} and Section \ref{sec_GNSdir}.

In this section, we assume that the open set $\Omega$ is also connected to apply the result \eqref{BV2} of Theorem \ref{thmBV2}.

\subsection{Preliminary Lemma}\label{sec_prellim}

We begin with a lemma which gives the discrete counterpart of \eqref{BV2}. This lemma is crucial to prove Theorems \ref{thm2} and \ref{thm1}.

\begin{lem}
Let $ \Omega$ be an open connected bounded polyhedral domain of $ \mathbb{R}^{N}$ and  $\text{m}(\Gamma^{0}) >0$ be a part of the boundary $\Gamma$. Let $ \M$ be a mesh satisfying (\ref{regmesh}). 
Then there exists a constant $c(\Omega)$ only depending on $ \Omega$ and $\Gamma^{0}$ such that
\begin{equation*}
\Vert\, u \,\Vert_{0,N/(N-1)} \, \leq \, c(\Omega) \, |\,u\,|_{1,1,\Gamma^0,\M}, \quad \forall u\in X({\M}).
\end{equation*}
\label{lembase}
\end{lem}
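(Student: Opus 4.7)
The plan is to reduce to the BV-type embedding \eqref{BV2} of Theorem~\ref{thmBV2}, by turning the boundary term $\sum_{\sigma \in \mathcal{E}_{ext}^{0}\cap \mathcal{E}_K}\text{m}(\sigma)|u_K|$ sitting in the seminorm into an internal jump of a suitable extension of $u$. Concretely, I would construct an auxiliary convex bounded Lipschitz open set $\tilde\Omega\supset\Omega$ with two properties: $\Gamma\setminus\Gamma^{0}\subset\partial\tilde\Omega$ (so the part of $\partial\Omega$ not lying in $\Gamma^{0}$ remains on the boundary of the extension and contributes no jump), and $\partial\tilde\Omega\setminus\overline{\Omega}$ has positive $(N-1)$-dimensional measure (so the extended function will have vanishing trace on a nonempty portion of $\partial\tilde\Omega$, as required to apply \eqref{BV2}). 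Since $\Omega$ is convex and polyhedral, a concrete choice is
\[
\tilde\Omega \,=\, \{x\in\R^N:\,\text{d}(x,\Omega)<\varepsilon\}\,\cap\,\bigcap_{F\not\subset\Gamma^{0}} H_F,
\]
where the intersection runs over the faces $F$ of $\Omega$ not included in $\Gamma^{0}$ and $H_F$ is the open half-space bounded by the hyperplane of $F$ which contains $\Omega$. As an intersection of convex sets this $\tilde\Omega$ is convex, bounded, and Lipschitz, and its new boundary $\partial\tilde\Omega\setminus\overline{\Omega}$ lies entirely beyond $\Gamma^{0}$.

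Next I would extend $u\in X(\M)$ to $\tilde u$ on $\tilde\Omega$ by setting $\tilde u=u$ on $\Omega$ and $\tilde u\equiv 0$ on $\tilde\Omega\setminus\overline{\Omega}$. A direct inspection of the distributional gradient shows that $\tilde u\in BV(\tilde\Omega)$ and that $TV_{\tilde\Omega}(\tilde u)$ is the sum of the interior-face jumps of $u$ together with the trace-jumps across $\Gamma^{0}$ (from $u_K$ to $0$); no jump appears along $\Gamma\setminus\Gamma^{0}$, precisely because that set lies on $\partial\tilde\Omega$. This gives the exact identity
\[
TV_{\tilde\Omega}(\tilde u)\,=\,\mathop{\sum_{\sigma\in\mathcal{E}_{int}}}_{\sigma=K|L}\text{m}(\sigma)\,|u_K-u_L|\,+\,\sum_{\sigma\in\mathcal{E}_{ext}^{0}\cap\mathcal{E}_K}\text{m}(\sigma)\,|u_K|\,=\,|u|_{1,1,\Gamma^{0},\M}.
\]

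To conclude I would apply \eqref{BV2} to $\tilde u$ on $\tilde\Omega$, using any nonempty piece of $\partial\tilde\Omega\setminus\overline{\Omega}$ as the subset where the trace of $\tilde u$ vanishes (it does so trivially, since $\tilde u\equiv 0$ in a neighborhood of that piece). This produces $\Vert\tilde u\Vert_{L^{N/(N-1)}(\tilde\Omega)}\leq c(\tilde\Omega)\,TV_{\tilde\Omega}(\tilde u)$; since $\tilde u$ is zero outside $\Omega$, the left-hand side equals $\Vert u\Vert_{0,N/(N-1),\M}$, and the identity above rewrites the right-hand side as $|u|_{1,1,\Gamma^{0},\M}$. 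Finally, because $\Gamma^{0}$ ranges over a finite family (subsets of the finite collection of boundary faces of $\Omega$), one takes the maximum of the constants $c(\tilde\Omega)$ over this family and obtains a constant $c(\Omega)$ depending only on $\Omega$.

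The main obstacle I expect is the construction of $\tilde\Omega$: it must be bounded and Lipschitz, must contain $\Omega$ while keeping $\Gamma\setminus\Gamma^{0}$ on its boundary, and must possess a nontrivial exterior shell adjacent to $\Gamma^{0}$. This step is precisely where the convexity hypothesis on $\Omega$ is needed, since on a non-convex polyhedral domain the outward thickening through only a proper subset of faces could introduce re-entrant corners and destroy the Lipschitz property (or leave the extended region unbounded if the remaining faces do not enclose $\Omega$), which would prevent a direct application of Theorem~\ref{thmBV2}.
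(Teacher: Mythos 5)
Your overall strategy is the same as the paper's: thicken $\Omega$ into a larger Lipschitz domain, extend $u$ so that its trace vanishes on a nonempty part of the new boundary, and apply \eqref{BV2} there. Your variant is more economical where it applies --- extending by zero only beyond $\Gamma^{0}$ gives the exact identity $TV_{\tilde\Omega}(\tilde u)=|\,u\,|_{1,1,\Gamma^{0},\M}$ and dispenses with any limit in $\varepsilon$ --- but it has a genuine gap coming from your definition of $\tilde\Omega$.

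The gap is this: you open the domain only beyond the polyhedral faces $F$ of $\Omega$ that are \emph{entirely} contained in $\Gamma^{0}$, since every other face contributes its clipping half-space $H_F$ to the intersection. But $\Gamma^{0}$ is an arbitrary nonempty part of $\Gamma$, and $\mathcal{E}_{ext}^{0}$ is the set of \emph{mesh} faces contained in $\Gamma^{0}$; nothing forces $\Gamma^{0}$ to be a union of whole polyhedral faces of $\Omega$. If, say, $\Gamma^{0}$ is half of one side of a square, then every polyhedral face $F$ satisfies $F\not\subset\Gamma^{0}$, your intersection of half-spaces clips the $\varepsilon$-neighbourhood back to $\Omega$ itself, so $\tilde\Omega=\Omega$: the mesh faces of $\mathcal{E}_{ext}^{0}$ remain on $\partial\tilde\Omega$ and contribute no jump, and there is no portion of $\partial\tilde\Omega$ on which the trace of $\tilde u$ vanishes, so \eqref{BV2} cannot be invoked at all. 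More generally, whenever a mesh face $\sigma\in\mathcal{E}_{ext}^{0}$ lies on a polyhedral face only partially covered by $\Gamma^{0}$, your construction loses the corresponding term $\text{m}(\sigma)\,|u_K|$ and the claimed identity for $TV_{\tilde\Omega}(\tilde u)$ fails. The paper circumvents exactly this by attaching a prism $K_\sigma^\varepsilon$ to \emph{every} boundary mesh face $\sigma$ (disjointness of the prisms is where convexity is used), setting the extension to $0$ on prisms over $\mathcal{E}_{ext}^{0}$ and to the neighbouring value $u_K$ on the others; this creates spurious jumps between adjacent prisms whose total contribution is of order $\varepsilon^{N-1}$, and these are removed by letting $\varepsilon\to 0$ after checking, via the cone condition, that the embedding constant $c(\Omega_\varepsilon)$ is uniform in $\varepsilon$. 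Your argument becomes complete only if you either restrict the lemma to $\Gamma^{0}$ equal to a union of polyhedral faces of $\Omega$, or replace the half-space clipping by a face-by-face prism construction at the mesh level and then control the dependence of the domain and of its embedding constant on $\varepsilon$ and on the mesh --- which is precisely the extra work the paper's proof carries out.
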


\begin{proof}
Let us consider $u \in X(\M)$; since $u$ is piecewise constant, $u$ belongs to $BV(\Omega)$. Then we can define the trace $Tu$ of $u$ by: for almost every $x \in \Gamma$,
\begin{equation*}
\lim_{r \rightarrow 0}\,\frac{1}{\text{m}\left(B(x,r) \cap \Omega\right)} \int_{B(x,r) \cap \Omega} \left|u-Tu(x)\right|\, dy =0.
\end{equation*}
Thus in general $Tu_{|\Gamma^{0}} \neq 0$ and in this framework we cannot take into account some prescribed homogeneous Dirichlet boundary conditions $u_{\sigma}=0$ for $\sigma \in \mathcal{E}_{ext}^0$. Therefore the idea is to modify the mesh $\M$. For a parameter $\varepsilon>0$, we set $$ \Omega_{\varepsilon}=\{ x \in \Omega\,; \, \text{d}(x,\pa \Omega)>\varepsilon\}.$$ 
Then, for a given control volume $K\in\M$, two cases may happen: either $K\subset \Omega_{\varepsilon}$ or $K\cap (\Omega\setminus \Omega_{\varepsilon})\neq \emptyset$ (we assume that $\varepsilon$ is sufficiently small such that the case where $K\subset (\Omega\setminus\Omega_{\varepsilon})$ does not occur).  Then,
\begin{itemize} 
\item if $K\subset \Omega_{\varepsilon}$, we set $K_{\varepsilon}=K$,
\item if $K\cap (\Omega\setminus \Omega_{\varepsilon})\neq \emptyset$, we split the control volume $K$ into two control volumes $K_{\varepsilon}^1=K \cap \Omega_{\varepsilon}$ and $K_{\varepsilon}^2=K \cap (\Omega \setminus \Omega_{\varepsilon})$.
\end{itemize}
It defines a new set of control volumes, denoted by $\M_{\varepsilon}$ (see Figure \ref{fig1}). The corresponding set of edges is denoted by $\E_{\varepsilon}$. It contains the edges of $\E$ included in $\Omega_{\varepsilon}$, some edges of $\E$ crossing $\partial \Omega_{\varepsilon}$ and therefore split into two new edges and some new edges included in $\partial \Omega_{\varepsilon}$.

Let us now define a function $u_{\varepsilon}\in X(\M_{\varepsilon})$, which is still a piecewise constant function but which takes into account some boundary values $u_{\sigma}=0$ for $\sigma\subset \Gamma_0$: 
\begin{equation*}
u_{\varepsilon}=\sum_{K_{\varepsilon}\in \M_{\varepsilon}}u_{K_{\varepsilon}}\mathbf{1}_{K_{\varepsilon}},
\end{equation*}
where
\begin{equation*}
u_{K_{\varepsilon}}=\left\{\begin{array}{lcl}
u_{K} & \text{ if } & \text{m}(\pa K_{\varepsilon}\cap \Gamma^{0})=0 \text{ and } K_{\varepsilon} \subset K, \\ 0 & \text{ if } & \text{m}(\pa K_{\varepsilon}\cap \Gamma^{0})>0.
\end{array}\right.
\end{equation*}
This function verifies $u_{\varepsilon}=0$ on $\Gamma^0$ and we can apply \eqref{BV2}:
\begin{equation}\label{lemineg1}
\Vert u_{\varepsilon}\Vert_{0,N/(N-1)}\leq c(\Omega) TV_{\Omega}(u_{\varepsilon}).
\end{equation}
In order to pass to the limit $\varepsilon\to 0$ in this last inequality, we analyze the limit of both sides of the inequality. 
First, we note that 
 \begin{align*}
  \left\vert \Vert\, u_{\varepsilon}\,\Vert^{N/(N-1)}_{0,N/(N-1)}-\Vert\,u\,\Vert^{N/(N-1)}_{0,N/N-1}\right\vert&\leq\!\!\!\!\sum_{\substack{K_{\varepsilon}\in \M_{\varepsilon},\,K_{\varepsilon}\subset K \\ \text{m}(\pa K_{\varepsilon}\cap\Gamma^{0})>0}}\text{m}(K_{\varepsilon})\,|u_{K}|^{N/(N-1)}\\
  &\leq {\textrm m}(\Omega\setminus \Omega_{\varepsilon}) \Vert u\Vert_{0,\infty}.
    \end{align*}
    It implies that 
 \begin{equation}\label{lemlim1}
    \ds\lim_{\varepsilon\to 0}\Vert\, u_{\varepsilon}\,\Vert_{0,N/(N-1)}=\Vert\,u\,\Vert_{0,N/N-1}.
  \end{equation}  
Now it remains to compare $TV(u_{\varepsilon})$ and $TV(u)$ in the limit $\varepsilon \rightarrow 0$. We have
\begin{align*}
TV(u_{\varepsilon})&=\sum_{\substack{\sigma_{\varepsilon}=K_{\varepsilon}|L_{\varepsilon}\\ K_{\varepsilon},\,L_{\varepsilon} \subset \Omega_{\varepsilon}}}\text{m}(\sigma_{\varepsilon})\,|u_{L_{\varepsilon}}-u_{K_{\varepsilon}}|+\sum_{\substack{\sigma_{\varepsilon}=K_{\varepsilon}|L_{\varepsilon}\\ K_{\varepsilon}\subset \Omega_{\varepsilon},\,L_{\varepsilon} \subset \Omega \setminus\Omega_{\varepsilon}}}\text{m}(\sigma_{\varepsilon})\,|u_{L_{\varepsilon}}-u_{K_{\varepsilon}}|\\
&\quad +\sum_{\substack{\sigma_{\varepsilon}=K_{\varepsilon}|L_{\varepsilon}\\ K_{\varepsilon},\,L_{\varepsilon} \subset \Omega \setminus\Omega_{\varepsilon}}}\text{m}(\sigma_{\varepsilon})\,|u_{L_{\varepsilon}}-u_{K_{\varepsilon}}|.
\end{align*}  
The first term in $TV(u_{\varepsilon})$ tends to $TV(u)$ as $\varepsilon \rightarrow 0$, whereas the second term allows to take the Dirichlet boundary conditions into account:
$$ \sum_{\substack{\sigma_{\varepsilon}=K_{\varepsilon}|L_{\varepsilon}\\ K_{\varepsilon}\subset \Omega_{\varepsilon},\,L_{\varepsilon} \subset \Omega \setminus\Omega_{\varepsilon}}}\text{m}(\sigma_{\varepsilon})\,|u_{L_{\varepsilon}}-u_{K_{\varepsilon}}| \rightarrow \sum_{\sigma \in \E_{ext}^{0}\cap \E_{K}}\text{m}(\sigma)\,|u_{K}|\text{ as } \varepsilon \rightarrow 0.$$
For  the third term we have:
$$ \sum_{\substack{\sigma_{\varepsilon}=K_{\varepsilon}|L_{\varepsilon}\\ K_{\varepsilon},\,L_{\varepsilon} \subset \Omega \setminus\Omega_{\varepsilon}}}\text{m}(\sigma_{\varepsilon})\,|u_{L_{\varepsilon}}-u_{K_{\varepsilon}}| \leq 2\, \text{card}(\E)\, \varepsilon\,\|u\|_{0,\infty} \rightarrow 0 \text{ as } \varepsilon \rightarrow 0$$
and therefore:
\begin{equation}\label{lemlim2} 
\ds\lim_{\varepsilon\to 0} TV(u_{\varepsilon})= TV(u)+\sum_{\sigma \in \E_{ext}^{0}\cap \E_{K}}\text{m}(\sigma)|u_{K}|=|\,u\,|_{1,1,\Gamma^{0},\M}.
\end{equation}
Finally,  passing to the limit $ \varepsilon \rightarrow 0$ in \eqref{lemineg1} and using \eqref{lemlim1} and \eqref{lemlim2}, we obtain 
\begin{equation*}
\Vert\, u\,\Vert_{0,N/(N-1)}\, \leq \,c(\Omega)\, |\,u\,|_{1,1,\Gamma^{0},\M}.
\end{equation*}
\begin{figure}[ht!]
\centering
\subfigure{
\newrgbcolor{zzttqq}{0.6 0.2 0}
\newrgbcolor{fftttt}{1 0.2 0.2}
\psset{xunit=0.65cm,yunit=0.65cm,algebraic=true,dotstyle=o,dotsize=3pt 0,linewidth=0.8pt,arrowsize=3pt 2,arrowinset=0.25}
\begin{pspicture*}(-3,-5)(13,5)
\psline[linecolor=zzttqq](-1.58,4.04)(6.38,4.04)
\psline[linecolor=zzttqq](6.38,4.04)(12.84,-0.74)
\psline[linecolor=zzttqq](12.84,-0.74)(8.38,-4.62)
\psline[linewidth=4pt,linecolor=fftttt](8.38,-4.62)(4.8,-2.98)
\psline[linewidth=4pt,linecolor=fftttt](4.8,-2.98)(-0.66,-4.96)
\psline[linecolor=zzttqq](-0.66,-4.96)(-1.58,4.04)
\rput[tl](1.22,4.68){$\Omega$}
\rput[tl](3.26,-3.9){\fftttt{$\Gamma^0$}}
\psline(-0.66,-4.96)(1.76,-1.08)
\psline(-1.23,0.64)(1.76,-1.08)
\psline(4.8,-2.98)(1.76,-1.08)
\psline(4.8,-2.98)(6.98,-0.54)
\psline(6.98,-0.54)(10.58,-2.7)
\psline(6.98,-0.54)(9.69,1.59)
\psline(6.98,-0.54)(6.38,4.04)
\psline(1.76,-1.08)(6.98,-0.54)
\psline(1.76,-1.08)(6.38,4.04)
\psline(4.02,1.42)(1.7,4.04)
\psline(-1.23,0.64)(1.7,4.04)
\psline(6.98,-0.54)(8.38,-4.62)
\rput[tl](1.8,-2.28){$u_{K_2}$}
\rput[tl](4.42,-1.36){$u_{K_3}$}
\rput[tl](6.58,-2.52){$u_{K_4}$}
\rput[tl](-0.12,-1.46){$u_{K_1}$}
\rput[tl](8.58,-2.54){$u_{K_5}$}
\psline(4.02,1.42)(6.98,-0.54)
\end{pspicture*}}
\subfigure{
\newrgbcolor{fftttt}{1 0.2 0.2}
\newrgbcolor{zzttqq}{0.6 0.2 0}
\psset{xunit=0.65cm,yunit=0.65cm,algebraic=true,dotstyle=o,dotsize=3pt 0,linewidth=0.8pt,arrowsize=3pt 2,arrowinset=0.25}
\begin{pspicture*}(-3,-5)(13,5)
\pspolygon[linecolor=zzttqq,fillcolor=zzttqq,fillstyle=solid,opacity=0.1](-0.4,3.1)(0.3,-3.41)(4.66,-1.84)(7.99,-3.5)(11.2,-0.88)(5.98,2.92)
\psline(-1.58,4.04)(6.38,4.04)
\psline(6.38,4.04)(12.84,-0.74)
\psline(12.84,-0.74)(8.38,-4.62)
\psline[linewidth=4pt,linecolor=red](8.38,-4.62)(4.8,-2.98)
\psline[linewidth=4pt,linecolor=red](4.8,-2.98)(-0.66,-4.96)
\psline(-0.66,-4.96)(-1.58,4.04)
\rput[tl](1.22,4.68){$\Omega$}
\rput[tl](3.26,-3.9){\fftttt{$\Gamma^0$}}
\psline(-0.66,-4.96)(1.76,-1.08)
\psline(-1.23,0.64)(1.76,-1.08)
\psline(4.8,-2.98)(1.76,-1.08)
\psline(4.8,-2.98)(6.98,-0.54)
\psline(6.98,-0.54)(10.58,-2.7)
\psline(6.98,-0.54)(9.69,1.59)
\psline(6.98,-0.54)(6.38,4.04)
\psline(1.76,-1.08)(6.98,-0.54)
\psline(1.76,-1.08)(6.38,4.04)
\psline(4.02,1.42)(1.7,4.04)
\psline(-1.23,0.64)(1.7,4.04)
\psline(6.98,-0.54)(8.38,-4.62)
\rput[tl](10.9,-1.64){$\varepsilon$}
\rput[tl](5.3,2.36){\zzttqq{$\Omega_{\varepsilon}$}}
\rput[tl](-0.8,-1.38){$u_{K_1}$}
\rput[tl](0.3,-1.06){$u_{K_1}$}
\rput[tl](1.76,-2.1){$u_{K_2}$}
\rput[tl](1.98,-3.24){0}
\rput[tl](4.4,-1.14){$u_{K_3}$}
\rput[tl](4.3,-2.14){$u_{K_3}$}
\rput[tl](6.44,-1.98){$u_{K_4}$}
\rput[tl](6.52,-3.1){0}
\rput[tl](8.3,-2.12){$u_{K_5}$}
\rput[tl](8.92,-3){$u_{K_5}$}
\psline[linecolor=zzttqq](-0.4,3.1)(0.3,-3.41)
\psline[linecolor=zzttqq](0.3,-3.41)(4.66,-1.84)
\psline[linecolor=zzttqq](4.66,-1.84)(7.99,-3.5)
\psline[linecolor=zzttqq](7.99,-3.5)(11.2,-0.88)
\psline[linecolor=zzttqq](11.2,-0.88)(5.98,2.92)
\psline[linecolor=zzttqq](5.98,2.92)(-0.4,3.1)
\psline{->}(10.85,-1.17)(11.55,-1.86)
\psline{->}(11.55,-1.86)(10.85,-1.17)
\psline(4.02,1.42)(6.98,-0.54)
\end{pspicture*}
}
\caption{Construction of the new mesh $\M_{\varepsilon}$ and of the function $u_{\varepsilon}$.}
\label{fig1}
\end{figure}
\end{proof}
\begin{rem} Observe that we could only modify the mesh locally around the
boundary $\Gamma_0$ but the procedure is more complicated to the present one.
Then, for sake of clarity, we prefer to add more control volumes
$K^\varepsilon$,   which are useless when we pass to the limit
$\varepsilon\rightarrow 0$.
\end{rem}

Now using this lemma we can prove the discrete Gagliardo-Nirenberg-Sobolev and Poincar\'e-Sobolev inequalities in the case with some homogeneous Dirichlet boundary conditions.

%%%%%%%%%%%%%%%%%%%%%%%%%%%%%%%%%%%%%%%%%%%%%%%%%

%%%%%%%%%%%%%%%%%%%%%%%%%%%%%%%%%%%%%%%

\subsection{Discrete Poincar\'e-Sobolev inequality}\label{sec_Sobolevdir}

In the case with some homogeneous Dirichlet boundary conditions, the discrete Poincar\'e-Sobolev inequalities are rewritten as follows.
\begin{thrm}[Discrete Poincaré-Sobolev inequality]
Let $ \Omega$ be an open connected bounded polyhedral domain of $ \mathbb{R}^{N}$ and let $\Gamma^{0} \subset \Gamma$, with $\text{m}(\Gamma^{0})>0$.
Let $ \M$ be a mesh satisfying (\ref{regmesh}). 
\begin{itemize}
\item If $1\leq p<N$, let $1\leq q\leq p^{*}=\ds\frac{pN}{N-p}$,
\item  if $p\geq N$, let $1\leq q <+\infty$. 
\end{itemize}
Then there exists a constant $C>0$ which only depends on $p$, $q$, $N$, $\Gamma^{0}$ and $ \Omega$ such that:
\begin{equation}
\Vert \, u \, \Vert_{0,q} \, \leq \, \frac{C}{\xi^{(p-1)/p}}\,\, |\,u\,|_{1,p,\Gamma^0,\M} \quad  \forall u\in X({\M})
\label{spdir}
\end{equation}
\label{thm2}
\end{thrm}

\begin{proof}
For $u \in X(\M)$, we apply Lemma \ref{lembase} to $v\in X(\M)$ defined by $v_{K}=|u_{K}|^{s}$ for all $K \in \M$. It yields
\begin{equation}
\Vert\,u\,\Vert_{0,sN/(N-1)}^{s}\,\leq\,\frac{C}{\xi^{(p-1)/p}}\,|\,u\,|_{1,p,\Gamma^{0},\M}\,\Vert\,u\,\Vert_{0,(s-1)p/(p-1)}^{(s-1)}
\label{ineg1psD}
\end{equation}
with $p>1$ and $s \geq 1$.\\
Then the proof is similar to the proof of Theorem \ref{thm4}, starting with inequality \eqref{BV2} instead of inequality \eqref{BV1} and using \eqref{ineg1psD} instead of \eqref{ineg1ps}.
\end{proof}

%%%%%%%%%%%%%%%%%%%%%%%%%%%%%%%%%%%%%%%%%%%%%%%%

\subsection{Discrete Gagliardo-Nirenberg-Sobolev and Nash inequalities}\label{sec_GNSdir}

\begin{thrm}[Discrete Gagliardo-Nirenberg-Sobolev inequality]
Let $ \Omega$ be an open connected bounded polyhedral domain of $ \mathbb{R}^{N}$ and $\Gamma^{0}$ be a part of the boundary such that $\text{m}(\Gamma^{0}) >0$. Let $ \M$ be a mesh satisfying (\ref{regmesh}). 
\begin{itemize}
\item If $1\leq p<N$, let $1\leq s \leq m \leq p^{\ast}=\ds\frac{pN}{(N-p)}$,
\item  if $p\geq N$, let $1\leq s \leq m <+\infty$. %\leq \ds\frac{N}{(N-1)}+\frac{(p-1)N}{p(N-1)}s
\end{itemize}
Then, there exists a constant $C>0$ only depending on $p$, $s$, $N$, $\theta$ and $\Omega$ such that:
\begin{equation*}
\Vert \, u \, \Vert_{0,m}\, \leq \, \frac{C}{\xi^{(p-1)\theta/p}}\,\, |\, u \,|_{1,p,\Gamma^0,\M}^{\theta} \, \Vert \, u \, \Vert_{0,s}^{1-\theta}, \quad \forall u\in X({\M}),
\end{equation*}
where $\theta$ is given by \eqref{theta:opt}.
\label{thm1}
\end{thrm}

\begin{proof}
The proof is similar to the proof of Theorem \ref{thm3}, starting from \eqref{spdir} instead of \eqref{poincare3}.
\end{proof}

Now using Theorems \ref{thm2} and \ref{thm1}, we easily get a discrete version of Nash inequality:

\begin{cor}[Discrete Nash inequality]
Let $ \Omega$ be an open connected bounded polyhedral domain of $ \mathbb{R}^{N}$ and $\Gamma^{0}$ be a part of the boundary such that $\text{m}(\Gamma^{0}) >0$. Let $ \M$ be a mesh satisfying (\ref{regmesh}). 
Then there exists a constant $C>0$ only depending on $ \Omega$, $\Gamma^{0}$ and $N$ such that
\begin{equation*}
\Vert\, u \,\Vert_{0,2}^{1+\frac{2}{N}} \, \leq \, \frac{C}{\sqrt{\xi}} \,\, |\,u\,|_{1,2,\Gamma^0,\M} \, \Vert\, u \,\Vert^{\frac{2}{N}}_{0,1}\quad \forall u\in X({\M}).
\end{equation*}
\end{cor}

%%%%%%%%%%%%%%%%%%%%%%%%%%%%%%%%%%%%%%%%%%%%%%%%%%%%%%%%%
\section{Application to finite volume approximations coming\\ from DDFV schemes}\label{sec.ddfv}

The discrete duality finite volume methods have been developed for ten years for the approximation of anisotropic elliptic problems on almost general meshes in 2D and 3D. They are based on some discrete operators (divergence and gradient), satisfying a discrete Green formula (the ``discrete duality''). The DDFV approximations were first proposed for the discretization of anisotropic and/or nonlinear diffusion problems on rather general meshes. We refer to the pioneer work of F. Hermeline \cite{Hermeline1998,Hermeline2000,Hermeline2004,Hermeline2007,Hermeline2009} who proposed a new approach dealing with primal and dual meshes   and  Y. Coudi\`ere, J.-P. Vila anf Ph. Villedieu \cite{Coudiere1999} who proposed a method of reconstruction for the discrete gradients. Next, K. Domelevo and P. Omn\`es \cite{Domelevo2005}, S. Delcourte, K. Domelevo and P. Omn\`es \cite{Delcourte2005} presented the discrete duality finite volume approach (DDFV) for the Laplace operator. Then, B. Andreianov, F. Boyer and F. Hubert \cite{Andreianov2007} gave a general background of DDFV methods for anisotropic and nonlinear elliptic problems. Most of these works treat 2D linear anisotropic, heterogeneous diffusion problems, while the case of discontinuous diffusion operators have been treated later by F. Boyer and F. Hubert in \cite{Boyer2008}. F. Hermeline \cite{Hermeline2007,Hermeline2009}, Y. Coudi\`ere and F. Hubert \cite{CoudiereHubert}, B. Andreianov, M. Bendahmane, F. Hubert and S. Krell \cite{ABHK} treat the analogous 3D problems. S. Krell  in \cite{Krell2011} treats the Stokes problem in 2D and in 3D whereas Y. Coudi\`ere and G. Manzini in \cite{Coudi`ere2010} treat linear elliptic convection-diffusion equations.
%These methods have been introduced by Domelevo and Omn\`es in \cite{Domelevo2005} for the Laplace equation.
%Then, they  have been extended to other problems, for instance in \cite{Andreianov2007, Delcourte2007}.

Our aim is now to extend the results from Sections \ref{sec.gc} and \ref{sec.dirichlet} to finite volume approximations coming from DDFV schemes in 2D.  
%The construction of DDFV schemes needs the definition of three meshes: a primal mesh, a dual mesh and a diamond mesh. Then, the approximate solutions are defined both on the primal and the dual meshes, while the approximate gradients are defined on the diamond mesh. Therefore, we need to adapt the definition of the spaces of approximate solutions and the definition of the discrete norms. It will be done in Section \ref{sec.ddfv.meshes}. Then, we will be able to establish some discrete Gagliardo-Nirenberg-Sobolev and Poincar\'e-Sobolev inequalities, in the general case (Section \ref{sec.ddfv.gen}) as in the case with Dirichlet boundary conditions (Section \ref{sec.ddfv.dir}) .
%
\subsection{Meshes and functional spaces}\label{sec.ddfv.meshes}

The construction of DDFV schemes needs the definition of three meshes: a primal mesh, a dual mesh and a diamond mesh. Then, the approximate solutions are defined both on the primal and the dual meshes, while the approximate gradients are defined on the diamond mesh. Therefore, we need to adapt the definition of the spaces of approximate solutions and the definition of the discrete norms.

\paragraph{Meshes.}

Let $\Omega$ be an open bounded  polygonal domain of $\R^2$. The mesh construction starts with the partition
of  $\Omega$ with disjoint open polygonal control volumes.
This partition, denoted by  $\M$, is called the interior primal mesh.
 We denote by $\dr\M$ the set of boundary edges, which are considered as degenerate control volumes. Then, the primal mesh is defined by  $\overline{\M}=\M\cup\dr\M$.

By connecting the centers of the primal mesh, we get a dual mesh $\overline{\Mie}$, in which we can distinguish the interior dual mesh $\M^*$ and the exterior dual mesh $\dr\M^*$, $\overline{\Mie}= \M^*\cup\dr\M^*$.
%Let ${\mathcal X}^*$ denote the set of the vertices of the primal control volumes in $\overline{\M}$. 
%Distinguishing the interior vertices from the vertices lying on the boundary, 
% we split ${\mathcal X}^*$ into ${\mathcal X}^*={\mathcal X}_{int}^*\cup {\mathcal X}_{ext}^*$. To any point $\xke\in {\mathcal X}_{int}^*$, we associate the polygon $\ke$ obtained by joining the centers of the primal cells for which $\xke$ is a vertex. 
% %$\{\xk\in X/\xke\in{\bar\k},\k\in\M\}$. 
%The set of such polygons defines the interior dual mesh denoted by $\M^*$. To any point $\xke\in {\mathcal X}_{ext}^*$, we then associate the polygon $\ke$,
% which vertices are $\{\xke\}\cup\{\xk\in {\mathcal X}/\xke\in{\bar\k},\k\in\overline{\M}\}$. It defines the boundary dual mesh  $\dr\M^*$ 
% and the dual mesh is defined by $\overline{\Mie}= \M^*\cup\dr\M^*$.
In the sequel, we will assume that  each primal cell $\k\in\M$ is star-shaped with respect to its center and each dual cell $\ke\in\overline{\Mie}$ 
is star-shaped with respect to its center.

We denote by ${\mathcal E}$ the set of edges of the primal mesh and ${\mathcal E}^*$ the set of edges of the dual mesh. To each edge of the primal mesh $\sigma\in \E$, we associate a diamond defined by connecting the vertices of the edge and the centers of the primal cells sharing the edge. The diagonals of this diamond are $\sigma\in\E$ and $\sigma^*\in\E^*$; we may note it $\Dsig$.  If $\sigma\in \E\cap\dr\Omega$,
 we note that the diamond degenerates into a triangle. The set of the diamond cells defines  a partition of $\Omega$, which is called the diamond mesh and is denoted by $\DD$. Let us note that $\DD$ can be splitted into $\DD=\DD_{int}\cup \DD_{ext}$ where $\DD_{int}$ is the set of interior (non degenerate) diamond cells and $\DD_{ext}$ is the set of degenerate diamond cells. 

%For all neighboring primal cells $\k$ and $\l$, we assume that $\dr\k\cap\dr\l$ is a segment, corresponding to an edge of the mesh $\M$, 
%denoted by $\sigma=\k\vert\l$. Let $\E$ be the set of such edges. We similarly define the edges $\E^*$ of the dual mesh $\overline{\Mie}$:
% $\sigma^*=\ke\vert\le$. For each couple $(\sigma,\sigma^*)\in\E\times\E^*$ such that $\sigma=\k\vert\l=(\xke,\xle)$ and $\sigma^*=\ke\vert\le=(\xk,\xl)$,
% we define the quadrilateral diamond cell $\Dsig$ whose diagonals are $\sigma$ and $\sigma^*$. If $\sigma\in \E\cap\dr\Omega$,
% we note that the diamond degenerates into a triangle. The set of the diamond cells defines  a partition of $\Omega$, which is called the diamond mesh and is denoted by $\DD$. Let us note that $\DD$ can be splitted into $\DD=\DD_{int}\cup \DD_{ext}$ where $\DD_{int}$ is the set of interior (non degenerate) diamond cells and $\DD_{ext}$ is the set of degenerate diamond cells. 
% 
 Finally, the DDFV mesh is made of the triple $\T=(\overline{\M},\overline{\Mie},\DD)$. %See Figure \ref{fig.mesh} for an example of DDFV mesh.

\begin{figure}[htb]
\begin{center}
\begin{tikzpicture}[scale=1.2]
\node[rectangle,scale=0.8,fill=black!50] (xle) at (0,0) {};
\node[circle,draw,scale=0.5,fill=black!5] (xl) at (2,1.3) {};
\node[rectangle,scale=0.8,fill=black!50]  (xke) at (0,4) {};
\node[circle,draw,scale=0.5,fill=black!5] (xk) at (-2,2.3) {};
\draw[line width=1pt] (xle)--(xke);
\draw[dashed, line width=1pt] (xk)--(xl);
\draw[dash pattern=on 2pt off 3pt on 6pt off 3pt,line width=2pt] (xk)--(xke)--(xl)--(xle)--(xk);

\node[yshift=-8] at (xle){$\xle$};
\node[yshift=8] at (xke){$\xke$};
\node[xshift=10] at (xl){$\xl$};
\node[xshift=-10] at (xk){$\xk$};

\draw[->,line width=1pt] (-2+3.*0.4,2.3-3.*0.1)--(-2+4.8*0.4,2.3-4.8*0.1);
\draw[->,line width=1pt] (-2+3.*0.4,2.3-3.*0.1)--(-2+3.*0.4-1.8*0.1,2.3-3.*0.1-1.8*0.4);
\draw[->,line width=1pt] (0,2.7)--(0,2.);
\draw[->,line width=1pt] (0,2.7)--(.7,2.7);
\node[right] at (0,2.3){$\tkele$};
\node[right] at (0.1,2.95){$\nksig$};
\node at (-0.3,1.6){$\tkl$};
\node at (-0.5,1.2){$\nkesige$};
\draw (0,1.5) arc (-90:-10:0.3);
\node[right] at (0.1,1.4){$\alpha_\D $};

\node[rectangle,scale=0.8,fill=black!50] at (3.,2) {};
\node[circle,draw,scale=0.5,fill=black!5] at (3.,1.5) {};
\node[right,xshift=8] at (3.2,2){Vertices of the primal mesh};
\node[right,xshift=8] at (3.2,1.5){Centers of the primal mesh};
\draw[line width=1pt]  (2.6,1)--(3.2,1);
\node[right,xshift=8] at (3.2,1){$\sigma=\k\vert\l$, edge of the primal mesh};
\draw[dashed, line width=1pt] (2.6,0.5)--(3.2,0.5);
\node[right,xshift=8] at (3.2,.5){$\sigma^*=\ke\vert\le$, edge of the dual mesh};
\draw[dash pattern=on 2pt off 3pt on 6pt off 3pt,line width=2pt] (2.6,0)--(3.2,0);
\node[right,xshift=8] at (3.2,.){Diamond $\Dsig$};

\node[rectangle,scale=0.8,fill=black!50] (xle2) at (9,0) {};
\node[circle,draw,scale=0.5,fill=black!5] (xl2) at (9,2) {};
\node[rectangle,scale=0.8,fill=black!50]  (xke2) at (9,4) {};
\node[circle,draw,scale=0.5,fill=black!5] (xk2) at (7.5,2.3) {};
\draw[line width=1pt] (xle2)--(xke2);
\draw[dashed, line width=1pt] (xk2)--(xl2);
\draw[dash pattern=on 2pt off 3pt on 6pt off 3pt,line width=2pt] (xle2)--(xk2)--(xke2);

\node[yshift=-8] at (xle2){$\xle$};
\node[yshift=8] at (xke2){$\xke$};
\node[xshift=10] at (xl2){$\xl$};
\node[xshift=-10] at (xk2){$\xk$};

\node[right] at (9.6,3){$\dkel$};
\node[right] at (9.6,1){$\dlel$};
\draw[<->,line width=0.8pt] (9.6,2.)--(9.6,4);
\draw[<->,line width=0.8pt] (9.6,0.)--(9.6,2);
\end{tikzpicture}
\end{center}
\caption{Definition of the diamonds $\Dsig$}\label{fig_diamonds}
\end{figure}

%Let us now introduce some notations associated to the mesh $\T$. For each primal cell or dual cell $V$ in ${\overline \M}$ or $\overline{\Mie}$, we define ${\rm m}_V$, the measure of the cell $V$,
% $\E_V$, the set of edges of $V$,  $\DD_V=\{\Dsig\in\DD,\ \sig\in\E_V\}$,
%$d_V$, the diameter of $V$.
%For a diamond $\Dsig$, whose vertices are $(\xk,\xke,\xl,\xle)$, we define : %, as shown on Figure \ref{fig_diamonds}:
%$\msig$ and $\msige$ the lengths of the primal edge $\sig$ and the dual edge $\sige$,
%$\md$, the measure of $\D$, $d_\D$ its diameter and 
%$\alpha_\D$ the angle between $(\xk,\xl)$ and $(\xke,\xle)$.
%As shown on Figure \ref{fig_diamonds}, we will also use two direct basis $(\tkele,\nksig)$ and $(\nkesige,\tkl)$, where 
%$\nksig$ is the unit normal to $\sigma$, outward $\k$,
%$\nkesige$ is the unit normal to $\sigma^*$, outward $\ke$,
%$\tkele$ is the unit tangent vector to $\sigma$, oriented from $\ke$ to $\le$,
%$\tkl$ is the unit tangent vector to $\sigma^*$, oriented from $\k$ to $\l$.
%For a boundary edge $\sig=[\xke,\xle]\in\dr\M$, we define 
%$\dkel $ the length of the segment $[\xke,\xl]$
%and $\dlel $ the length of the segment $[\xle,\xl]$.
Let us now introduce some notations associated to the mesh $\T$. For each primal cell or dual cell $V$ in ${\overline \M}$ or $\overline{\Mie}$, we define 
 $\E_V$, the set of edges of $V$,  $\DD_V=\{\Dsig\in\DD,\ \sig\in\E_V\}$,
$d_V$, the diameter of $V$.
For a diamond $\D$, whose vertices are $(\xk,\xke,\xl,\xle)$, we define  %, as shown on Figure \ref{fig_diamonds}:
$d_\D$ its diameter and 
$\alpha_\D$ the angle between $(\xk,\xl)$ and $(\xke,\xle)$.
As shown on Figure \ref{fig_diamonds}, we will also use two direct basis $(\tkele,\nksig)$ and $(\nkesige,\tkl)$, where 
$\nksig$ is the unit normal to $\sigma$, outward $\k$,
$\nkesige$ is the unit normal to $\sigma^*$, outward $\ke$,
$\tkele$ is the unit tangent vector to $\sigma$, oriented from $\ke$ to $\le$,
$\tkl$ is the unit tangent vector to $\sigma^*$, oriented from $\k$ to $\l$.
%For a boundary edge $\sig=[\xke,\xle]\in\dr\M$, we define 
%$\dkel $ the length of the segment $[\xke,\xl]$
%and $\dlel $ the length of the segment $[\xle,\xl]$.
%

In all the sequel, we will assume that the diamonds cannot be flat. It means :
\begin{equation}\label{def.alphat}
\exists \alpha_\petitt \in ]0,\frac{\pi}{2}] \mbox { such that } |\sin(\alpha_\D)|\geq \sint\quad  \forall \D\in\DD.
\end{equation}
As for all $\D=\Dsig\in\DD$, we have $2{\rm m}(\D)={\rm m}(\sigma)\msige\sin(\alpha_\D)$ and hypothesis \eqref{def.alphat} implies
\begin{equation*}
{\rm m}(\sigma)\msige\leq \ds\frac{2\md}{\sin(\alpha_\petitt)}.
\end{equation*}
We also assume some regularity of the mesh, as in \cite{Andreianov2007}, which implies
\begin{equation}\label{def.zeta}
\begin{aligned}
\exists \zeta>0,& \ds\sum_{\Dsig\in\DD_{\k}}\!\!{\rm m}(\sigma)\msige\leq \frac{\mk}{\zeta}\quad  \forall \k\in \M,\\
&\ds\sum_{\Dsig\in\DD_{\ke}}\!\!{\rm m}(\sigma)\msige\leq \frac{\mke}{\zeta}\quad  \forall \ke\in \overline{\Mie}.
\end{aligned}
\end{equation}

\paragraph{Definition of the approximate solution.}

A discrete duality finite volume scheme leads to the computation of discrete unknowns on the primal and the dual meshes :
$(\uk)_{\k\in \overline{\M}}$ and $(\uke)_{\ke\in \overline{\Mie}}$. From these discrete unknowns, we can reconstruct two different approximate solutions : 
$$
u_{\M}=\ds\sum_{\k\in\M} \uk {\mathbf 1}_{\k} \mbox{ and } u_{\overline{\Mie}}=\ds\sum_{\ke\in\overline{\Mie}} \uke {\mathbf 1}_{\ke}.
$$
But, in order to use simultaneously the discrete unknowns computed on the primal and the dual meshes, we prefer to define the approximate solution as 
$
u=\frac{1}{2}(u_{\M}+u_{\overline{\Mie}}).
$
Therefore, the space of approximate solutions $Z(\T)$ is defined by:
%$$
\begin{multline*}
Z(\T)=\left\{\vphantom{\left(\ds\sum_{\k\in\M} \uk {\mathbf 1}_{\k}\right)}u\in L^1(\Omega)\ / \ \exists u_{\T}=\left( (\uk)_{\k\in \overline{\M}},(\uke)_{\ke\in \overline{\Mie}}\right) \right.\\[-4.mm]
\left.\mbox{ such that } u= \frac{1}{2}\left(\ds\sum_{\k\in\M} \uk {\mathbf 1}_{\k}+\ds\sum_{\ke\in\overline{\Mie}} \uke {\mathbf 1}_{\ke}\right) \right\}.
\end{multline*}
%$$
For a given function $u\in Z(\T)$, we define the discrete $L^p$-norm $
\Vert u\Vert _{0,p,\T}$ by 
%$$
%\Vert u\Vert _{0,p,\T}=\left(\frac{1}{2}\sum_{\k\in\M}\mk|\uk|^p+\frac{1}{2}\sum_{\ke\in\overline{\Mie}}\mke|\uke|^p\right)^{1/p}.
%$$
$$
\Vert u\Vert _{0,p,\T}^p=\left(\frac{1}{2}\Vert u_{\M}\Vert_{0,p}^p+\frac{1}{2}\Vert u_{\Mie}\Vert_{0,p}^p\right).
$$

\paragraph{Discrete gradient.}

A key point in the construction of the DDFV schemes is the definition of the discrete operators (divergence and gradient). We just focus here on the definition of the discrete gradient, which will be useful for the definition of the discrete $W^{1,p}$-seminorms.

Let $u\in Z(\T)$. The discrete gradient of $u$, $\nabla^d u$ is defined as a piecewise constant function on each diamond cell :
$$
\nabla^d u=\ds\sum_{\D\in\DD} \gradD u\,{\mathbf 1}_{\D},
$$
where, for $\D\in\DD$, 
$$
\gradD u=\frac{1}{\sind}\left(\frac{\ul-\uk}{\msige}\nksig+\frac{\ule-\uke}{{\rm m}(\sigma)}\nkesige\right).
$$
This discrete gradient has been introduced in \cite{Coudiere1999}. It verifies:
$$
 \gradD u\cdot\tkele=\frac{\ule-\uke}{{\rm m}(\sigma)} \mbox{ and } \gradD u\cdot\tkl=\frac{\ul-\uk}{\msige}.
$$
Using this discrete gradient, we may now define the discrete $W^{1,p}$-seminorm and norm of a given function $u\in Z(\T)$:
\begin{eqnarray*}
|u|_{1,p,\T}&=&\left(\ds\sum_{\D\in\DD} \md|\gradD u|^p\right)^{1/p},  \\
\Vert u\Vert_{1,p,\T} &=& \Vert u\Vert _{0,p,\T}+|u|_{1,p,\T}.
\end{eqnarray*}

\subsection{Discrete functional inequalities in the general case}\label{sec.ddfv.gen}

Our aim is now to extend the results of Section \ref{sec.gc} to the case of finite volume approximations coming from some DDFV schemes: 
$u\in Z(\T)$. We will use that such functions are defined as $u=\frac{1}{2}(u_{\M}+u_{\overline{\Mie}})$ with $u_{\M}\in X(\M)$ 
and $u_{\overline{\Mie}}\in X({\overline{\Mie}})$. %Nevertheless, we have to take care because the primal and the dual meshes $\M$ and ${\overline{\Mie}}$ does not satisfy the admissibility condition required in Theorems \ref{thm4} and \ref{thm3}.
 
\begin{thrm}[General discrete Poincaré-Sobolev inequality in the DDFV framework]
Let $ \Omega$ be an open bounded polygonal domain of $ \mathbb{R}^{2}$. Let $\T=(\overline{\M},\overline{\Mie},\DD)$ be a DDFV mesh satisfying \eqref{def.alphat} and \eqref{def.zeta}.
\begin{itemize}
\item If $1 \leq p<2$, let  $1 \leq q \leq p^{*}=\ds\frac{2p}{2-p}$,
\item  if $p \geq 2$, let $1 \leq q < + \infty$.
\end{itemize}
Then there exists a constant $C>0$ only depending on $p$, $q$ and $\Omega$ such that:
\begin{equation*}
\Vert\, u \,\Vert_{0,q,\T} \, \leq \, \ds\frac{C}{(\sin(\alpha_\petitt))^{1/p}\zeta^{(p-1)/(p)}}\,\, \Vert\, u \,\Vert_{1,p,\T},\quad \forall u\in Z(\T),
\end{equation*}
\label{thm4DDFV}
\end{thrm}
 
\begin{proof}
Since the proof is similar to the proof of Theorem \ref{thm4}, we only detail here the case $1<p<2$. Let $s\geq 1$. For $u\in Z(\T)$, as $u_{\M}\in X(\M)$ and $u_{\overline{\Mie}}\in X({\overline{\Mie}})$, we may write:
\begin{eqnarray}
 \left\Vert u_\M \right\Vert_{0,2s,\M}^s&\leq& c(\Omega) \left(\bigl| |u_\M|^s\bigl|_{1,1,\M}+ \Vert u_\M \Vert_{0,s,\M}^s\right)\\
  \left\Vert u_{\overline{\Mie}}\right\Vert_{0,2s,{\overline{\Mie}}}^s&\leq& c(\Omega) \left(\bigl||u_{\overline{\Mie}}|^s\bigl|_{1,1,{\overline{\Mie}}}+ \Vert u_{\overline{\Mie}} \Vert_{0,s,{\overline{\Mie}}}^s\right)\label{depmstar}
 \end{eqnarray}
But, following the same computations as in the proof of Theorem \ref{thm4}, we get 
%\begin{eqnarray*}
\begin{multline*}
 \bigl| |u_\M|^s\bigl|_{1,1,\M}= \ds\sum_{\Dsig\in\DD_{int}} {\rm m}(\sigma)\Bigl\vert|\uk|^s-|\ul|^s\Bigl\vert\\
% &\leq & s\ds\sum_{\Dsig\in\DD_{int}}{\rm m}(\sigma)\msige \left\vert \frac{\uk-\ul}{\msige}\right\vert (|\uk|^{s-1}+|\ul|^{s-1})\\
 \leq\left(  \ds\sum_{\Dsig\in\DD_{int}}{\rm m}(\sigma)\msige \left\vert \frac{\uk-\ul}{\msige}\right\vert^p   \right)^{\frac{1}{p}} \left( \ds\sum_{\k\in\M}\sum_{\Dsig\in\DD_{\k}} {\rm m}(\sigma)\msige |\uk|^{\frac{(s-1)p}{p-1}}\right)^{\frac{p-1}{p}}
 %\end{eqnarray*}
 \end{multline*}
 Using the regularity hypotheses on the mesh, we get 
 $$
 \bigl| |u_\M|^s\bigl|_{1,1,\M}\leq \ds\frac{C}{(\sin(\alpha_\petitt))^{1/p}\zeta^{(p-1)/p}} \left(  \ds\sum_{\Dsig\in\DD_{int}}\md \left\vert \frac{\uk-\ul}{\msige}\right\vert^p   \right)^{\frac{1}{p}} \Vert u_\M\Vert_{0,(s-1)p/(p-1),\M}^{s-1}.
 $$
  But, by definition, $\ds\frac{\uk-\ul}{\msige}=\gradD u\cdot\tkl$ and therefore $\ds\left\vert \frac{\uk-\ul}{\msige}\right\vert\leq \vert\gradD u\vert$. It yields :
 $$
 \bigl| |u_\M|^s\bigl|_{1,1,\M}\leq \ds\frac{C}{(\sin(\alpha_\petitt))^{1/p}\zeta^{(p-1)/p}}|u|_{1,p,\T}\lV u_\M\rV_{0,(s-1)p/(p-1),\M}^{s-1}.
 $$
 Then it yields that for any $p>1$ and $s\geq 1$
 \begin{equation*}
\Vert\,u_{\M}\,\Vert_{0,2s,\M}^{s}\,\leq\,C\left(\frac{1}{\sin(\alpha_\petitt)^{1/p}\zeta^{(p-1)/p}}\,|\,u\,|_{1,p,\T}\,\Vert\,u_{\M}\,\Vert_{0,(s-1)p/(p-1),\M}^{s-1}+\Vert\,u_{\M}\,\Vert_{0,s,\M}^{s}\right).
\end{equation*}
Thus using interpolation inequality \eqref{ineg2ps} and choosing $s=p/(2-p)\geq 1$, we obtain as in the proof of Theorem \ref{thm4}
\begin{equation*}
\Vert\,u_{\M}\,\Vert_{0,2p/(2-p),\M}\,\leq\,C\left(\frac{1}{\sin(\alpha_\petitt)^{1/p}\zeta^{(p-1)/p}}\,|\,u\,|_{1,p,\T}+\Vert\,u\,\Vert_{0,p,\M}\right)
\end{equation*}
and finally since $\Vert\,u_{\M}\,\Vert_{0,p,\M}\leq 2\Vert\,u\,\Vert_{1,p,\T}$ by definition, we get
\begin{equation*}
\Vert\,u_{\M}\,\Vert_{0,2p/(2-p),\M}\,\leq \, \frac{C}{\sin(\alpha_\petitt)^{1/p}\zeta^{(p-1)/p}}\,\Vert\,u\,\Vert_{1,p,\T}.
\end{equation*}
With similar computations on the dual mesh, from \eqref{depmstar} we get
\begin{equation*}
\Vert\,u_{\overline{\Mie}}\,\Vert_{0,2p/(2-p),\overline{\Mie}}\,\leq \, \frac{C}{\sin(\alpha_\petitt)^{1/p}\zeta^{(p-1)/p}}\,\Vert\,u\,\Vert_{1,p,\T}.
\end{equation*}
Then we can conclude using inclusion of $L^{2p/(2-p)}(\Omega)$ in $L^{q}(\Omega)$ for all $q \leq p^{*}$.
\end{proof}

 As in the classical finite volume framework, we can now prove discrete Gagliardo-Nirenberg-Sobolev inequalities. The proof is similar to the proof of Theorem \ref{thm3}; it will not be detailed here.

\begin{thrm}[General discrete Gagliardo-Nirenberg-Sobolev inequality in the DDFV framework]
Let $ \Omega$ be an open bounded polygonal domain of $ \mathbb{R}^{2}$. Let $\T=(\overline{\M},\overline{\Mie},\DD)$ be a DDFV mesh satisfying \eqref{def.alphat} and \eqref{def.zeta}.
\begin{itemize}
\item If $1 \leq p <2$, let $1\leq s\leq m\leq p^{\ast}=\frac{2p}{2-p}$,
\item if $1 \leq p <2$, let $1\leq s\leq m<+\infty$.
\end{itemize}
Then, there exists a constant $C>0$ only depending on $p$, $s$, $m$ and $\Omega$ such that:
\begin{equation*}
\Vert\, u \,\Vert_{0,m,\T} \, \leq \, \ds\frac{C}{(\sin(\alpha_\petitt))^{\theta/p}\zeta^{\theta(p-1)/(p)}}\,\, \Vert\, u \,\Vert^{\theta}_{1,p,\T}\, \Vert \,u \,\Vert_{0,s,\T}^{1-\theta},\ \forall u\in Z(\mathcal{T}),
\end{equation*}
where $\theta$ is given by  \eqref{theta:opt}.
\end{thrm}

 Let us now focus on the Poincar\'e-Wirtinger inequality in the DDFV case. This result has been proved recently in \cite{Le}. We will give here a very short proof using the embedding of $BV(\Omega)$ into $L^2(\Omega)$ recalled in Theorem \ref{thmBV2}.
 \begin{thrm}[Discrete Poincar\'e-Wirtinger inequality in the DDFV framework]
 Let $ \Omega$ be an open bounded connected polygonal domain of $ \mathbb{R}^{2}$. Let $\T=(\overline{\M},\overline{\Mie},\DD)$ be a DDFV mesh satisfying \eqref{def.alphat}.
 There exists a constant $C>0$ depending only on $\Omega$, such that for all $u\in Z(\T)$ satisfying 
 \begin{equation}\label{hypmoynulle}
 \ds\sum_{\k\in\M} \mk \uk= \ds\sum_{\k\in\overline{\Mie}} \mke \uke=0,
 \end{equation}
 we have 
 \begin{equation*}
 \lV u\rV_{0,2,\T}\leq \ds\frac{C}{\sin(\alpha_\petitt)} \lv u\rv_{1,2,\T}.
 \end{equation*}
 \end{thrm}
 
 \begin{proof}
 Let $u\in Z(\T)$. Applying \eqref{BV3} to $u_\M\in X(\M)$ and $u_{\overline{\Mie}}\in X(\overline{\Mie})$, we get, under the hypothesis \eqref{hypmoynulle},
 %$$
 %\begin{aligned}
 %\lV u_\M\rV_{0,2,\M}&&\leq &&& c(\Omega) TV_{\Omega}(u_\M)&&\leq&& c(\Omega)\lv u_\M\rv_{1,1,\M}\\
 % \lV u_{\overline{\Mie}}\rV_{0,2,{\overline{\Mie}}}&&\leq &&& c(\Omega) TV_{\Omega}(u_{\overline{\Mie}})&&\leq&& c(\Omega)\lv u_{\overline{\Mie}}\rv_{1,1,{\overline{\Mie}}}
 %\end{aligned}
$$ 
\lV u_\M\rV_{0,2,\M} \,+\, \lV u_{\overline{\Mie}}\rV_{0,2,{\overline{\Mie}}} \,\leq\, c(\Omega)\, \left(\, \lv u_\M\rv_{1,1,\M} \,+\,  \lv u_{\overline{\Mie}}\rv_{1,1,{\overline{\Mie}}} \;\right).
%\\
%  &&\leq &&& c(\Omega) TV_{\Omega}(u_{\overline{\Mie}})&&\leq&& c(\Omega)\lv u_{\overline{\Mie}}\rv_{1,1,{\overline{\Mie}}}
 $$
 But,
 \begin{eqnarray*}
 \lv u_\M\rv_{1,1,\M}&\leq& \ds\sum_{\Dsig\in\DD_{int}}{\rm m}(\sigma)\msige\frac{|\uk-\ul|}{\msige}\\
 &\leq&\ds \frac{2}{\sin(\alpha_\petitt)}\sum_{\Dsig\in\DD_{int}}\md\frac{|\uk-\ul|}{\msige}\\
 &\leq &\ds \frac{2}{\sin(\alpha_\petitt)}\textrm{m}(\Omega)^{1/2}\lv u\rv_{1,2,\T},
 \end{eqnarray*}
 thanks to Cauchy-Schwarz inequality. By the same way, 
 we get  the same bound for $ \lv u_{\overline{\Mie}}\rv_{1,1,{\overline{\Mie}}}$ and it finally yields $ \lV u\rV_{0,2,\T}\leq \ds \frac{2}{\sin(\alpha_\petitt)}\textrm{m}(\Omega)^{1/2}c(\Omega)\lv u\rv_{1,2,\T}$.
 \end{proof}
 
 \subsection{Discrete functional inequalities in the case with Dirichlet boundary conditions}\label{sec.ddfv.dir}

In this Section, we want to extend the discrete Poincaré-Sobolev inequalities of Section \ref{sec_GNSdir} to finite volume approximations obtained from a DDFV scheme. We first recall how Dirichlet boundary conditions are taken into account in DDFV methods. Let $\Gamma^0$ be a part of the boundary such that $\text{m}(\Gamma^{0}) >0$. At the discrete level, homogeneous Dirichlet boundary conditions on $\Gamma^0$ will be written:
\begin{equation}\label{DDFVbc}
\uk=0,\  \forall \k\in\dM,\ \k\subset\Gamma^0 \mbox{ and }  \uke=0,  \  \forall \ke\in\dMie,\ \overline{\ke}\cap\Gamma^0\neq\emptyset.
\end{equation}
Therefore,we consider the corresponding set of finite volume approximations, $Z_{\Gamma^0}(\T)$ defined by:
$$
Z_{\Gamma^0}(\T)=\left\{ u\in Z(\T) \mbox{ satisfying } \eqref{DDFVbc}\right\}.
$$
Let us note that the definition of the discrete $W^{1,p}$-seminorm is the same on $Z_{\Gamma^0}(\T)$ as on $Z(\T)$. Indeed, the fact that the approximate solution vanishes at the boundary is taken into account in the definition of the discrete gradient $\gradD u$ for $\D\in\DD_{ext}$, and therefore in $\lv u\rv_{1,p,\T}$.

Finally, combining the techniques of proof of Theorem \ref{thm2} (using Lemma \ref{lembase}) and Theorem \ref{thm4DDFV}, we establish the following Theorem.

\begin{thrm}[Discrete Poincaré-Sobolev inequality in the DDFV framework]
Let $ \Omega$ be an open connected bounded polygonal domain of $ \mathbb{R}^{2}$ and $\Gamma^0$ be a part of the boundary such that ${\textrm m}(\Gamma^0)>0$. Let $\T=(\overline{\M},\overline{\Mie},\DD)$ be a DDFV mesh satisfying \eqref{def.alphat} and \eqref{def.zeta}.
\begin{itemize}
\item  If $1 \leq p<2$, let $1 \leq q \leq p^{*}=\ds\frac{2p}{2-p}$,
\item if $p \geq 2$, let $1 \leq q < + \infty$.
\end{itemize}
Then there exists a constant $C>0$ only depending on $p$, $q$, $\Gamma^{0}$ and $\Omega$ such that:
\begin{equation*}
\Vert\, u \,\Vert_{0,q,\T} \, \leq \, \ds\frac{C}{(\sin(\alpha_\petitt))^{1/p}\zeta^{(p-1)/(p)}}\,\, \vert\, u \,\vert_{1,p,\T},\quad \forall u\in Z(\T),
\end{equation*}
\end{thrm}

\textbf{Acknowledgement:} This work was partially supported by the European Research Council ERC Starting Grant 2009, project 239983-NuSiKiMo. The authors want to thank one anonymous referee for her/his careful reading and suggestions which have improved a lot this paper.

\bibliographystyle{plain}
\bibliography{bibliographie}

\end{document}